\newtheorem{theorem}{Theorem}[section]
\newtheorem{lemma}[theorem]{Lemma}
\newtheorem{conjecture}[theorem]{Conjecture}
\newtheorem{remark}{Remark}[section]
\newtheorem{definition}[theorem]{Definition}
\newcommand{\C}{\mathbb{C}}
\newcommand{\N}{\mathbb{N}}
\newcommand{\R}{\mathbb{R}}
\newcommand{\Z}{\mathbb{Z}}
\newcommand{\expig}{e^{i \theta}}
\newcommand{\bydef}{\,\stackrel{\mbox{\tiny\textnormal{\raisebox{0ex}[0ex][0ex]{def}}}}{=}\,}
\newcommand{\ta}{\tilde{a}}
\newcommand{\tp}{\tilde{p}}
\newcommand{\tb}{\tilde{b}}
\newcommand{\ba}{\bar{a}}
\newcommand{\bp}{\bar{p}}
\newcommand{\cB}{\mathcal{B}}
\newcommand{\cS}{\mathcal{S}}
\title{
Singularities and heteroclinic connections in complex-valued evolutionary equations with a quadratic nonlinearity
}
\author{%
Jonathan~Jaquette\thanks{%
	Department of Mathematics and Statistics, Boston University,
	Boston, MA 02215, USA. 
}
\and
Jean-Philippe~Lessard\thanks{%
	Department of Mathematics and Statistics, McGill University,
%	805 Sherbrooke West,
	Montreal, QC H3A 0B9, Canada
}\and
Akitoshi~Takayasu\thanks{%
	Faculty of Engineering, Information and Systems, University of Tsukuba, 1-1-1 Tennodai, Tsukuba, Ibaraki 305-8573, Japan (\texttt{takitoshi@risk.tsukuba.ac.jp})
}
}
\begin{document}

\maketitle

\begin{abstract} 
In this paper, we consider the dynamics of solutions to complex-valued evolutionary partial differential equations (PDEs) and show existence of heteroclinic orbits from nontrivial equilibria to zero via computer-assisted proofs. We also show that the existence of unbounded solutions along unstable manifolds at the equilibrium follows from the existence of heteroclinic orbits. Our computer-assisted proof consists of three separate techniques of rigorous numerics: an enclosure of a local unstable manifold at the equilibria, a rigorous integration of PDEs, and a constructive validation of a trapping region around the zero equilibrium. %As a consequence, we conjecture the existence of blow-up solutions in finite time along the unstable manifolds.
\end{abstract}

\noindent
{\bf Keywords:} nonlinear heat equation, heteroclinic connections, global existence of solution,  rigorous numerics

%%%%%%%%%%%%%%%%%%%%%%%%%%%%%%%
%%%%%%%%%% INTRODUCTION %%%%%%%%%%%%
%%%%%%%%%%%%%%%%%%%%%%%%%%%%%%%
%\input{introduction}
%!TEX root = connecting_orbits_CGL.tex

\section{Introduction} \label{sec:Introduction}

% Motivate	of this work....

Understanding the long term behavior of solutions to evolutionary equations is fundamental to the discipline.  
This begins with questions of existence, and  whether local  existence can be extended globally. 
Of the solutions which exist globally, coherent structures (such as equilibria, traveling waves and periodic orbits) serve as emblematic examples of how solutions to a PDE may behave. 
The local stability of these coherent structures helps to inform us as to what type of phenomena are generically observable: while stable structures are robust and attract nearby trajectories, unstable objects repel solutions and are harder to detect. 
Nevertheless unstable objects are critically important in understanding the transient behavior of a system, and  understanding to which particular stable state solutions will be attracted. 
%For example unstable periodic orbits can be used to provide an organizing center for analyzing chaotic dynamics, where the stable chaotic attractor may have a  complicated fractal structure. \JJ{Maybe remove this last sentence -jj}

Characterizing the long term behavior of trajectories near an unstable object requires a global analysis, combining a perturbative analysis of solutions near the unstable object, with a non-perturbative analysis for when the solution is far from the invariant object. 
The non-perturbative analysis is genuinely difficult, and one often turns to numerical methods for insight. 
There are well established algorithms for integrating PDEs within a certain degree of precision. 
However in cases where the global existence of solutions is unknown, such as the 3D Navier-Stokes, it is not always obvious whether numerical results indicative of blowup are a genuine feature of the dynamics, or an artifact of the numerical method's finite precision.

It is along these lines that we study the  global dynamics and global wellposedness (or lack thereof) of the class of  complex-valued nonlinear heat equations  
\begin{equation} \label{eq:CGL}
	u_t = \expig (u_{xx} + u^2), \quad \theta \in [-\pi/2,\pi/2]
\end{equation}
under the periodic boundary condition in $x\in(0,1)$, where $u_t=\partial_t u$ and $u_{xx}=\partial^2_x u$.  
As a case study, we investigate trajectories on the  unstable manifold of a nontrivial equilibrium of \eqref{eq:CGL} at parameters $ \theta \in \{ 0 , \pi /4 , \pi /2\}$. 
We numerically observe that most solutions are heteroclinic to the zero solution, however some solutions appear to grow without bound. 
By employing state-of-the-art techniques in validated numerics, we are able to rigorously characterize the limiting behavior of some solutions, and identify others where the numerical precision is lacking.  
Furthermore, through a forcing argument we can confirm the existence of unbounded solutions within the unstable manifold  when $ \theta \in \{ 0 , \pi /4 \}$.

Equation \eqref{eq:CGL} can be seen as a generalization of a classical model for blowup in PDEs.  
When $ \theta =0$ and the initial data is real, this reduces to the quadratic case of the Fujita equation $u_t = u_{xx} + u^{p}$  on a bounded domain \cite{fujita} whose blowup properties for non-negative initial data have been studied extensively \cite{levine1990role,deng2000role}.  However as a complex equation the condition of non-negative real initial data is not an open property and finite time blowup, while present, does not occur as robustly \cite{S0002-9947-2012-05797-7}.    
By varying $\theta$, we are able to compare the blowup phenomena and mechanisms between qualitatively disparate types of evolutionary equations. 
%The parameter $ \theta$ that we include in our analysis significantly changes the dynamics of the equation. 
When $ \theta =0$ then \eqref{eq:CGL} is a nonlinear heat equation,  when $\theta = \pm \pi/ 2$ then \eqref{eq:CGL} is a nonlinear Schr\"odinger equation, and for other values of $\theta$ the equation is similar to the complex Ginzburg-Landau equation.

Furthermore  this equation arises from the mathematical studies by Masuda \cite{MASUDA1983119,Masuda1984} and numerical observations by Cho et al.\ \cite{Cho2016} to consider the nonlinear heat equation in the complex plane of time
\begin{equation}\label{eq:CF}
	u_{z}=u_{x x}+u^{2}
\end{equation}
on a straight path
$\Gamma_\theta\bydef\left\{z\in\C : z = t\expig\right\}$. 
This setting is slightly different from typical settings of complex-valued nonlinear heat equations, which often consider the complex-valued unknown function $u\in\C$ and both time and space variables are real-valued, see \cite{S0002-9947-2012-05797-7} for example. 
Our setting here is the case of nonlinear heat equations whose time variable is complex-valued.

As an innovative work, Masuda has considered the Cauchy problem of \eqref{eq:CF} under Neumann boundary conditions in \cite{MASUDA1983119,Masuda1984}. He has proved global well-posedness of analytic solutions of \eqref{eq:CF} in a specific domain and existence of branching singularities at the movable singularity, which is called \emph{blow-up times} in the case of real-valued nonlinear heat equations. Following Masuda's studies, Cho et al.\ have numerically observed solution dynamics of the Cauchy problem of \eqref{eq:CF} under the periodic boundary condition in \cite{Cho2016}. They have also presented that the solution may converge to the zero function on the straight path $\Gamma_\theta$. This result agrees with Masda's mathematical results in the case of the periodic boundary conditions.

Moreover, the present authors with H.\ Okamoto have studied the Cauchy problem of  \eqref{eq:CF} under the same setting as Cho et al.\ \cite{Cho2016} in \cite{takayasu2019rigorous} and have proved two results with computer-assistance. The first one is that there exists a branching singularity at the blow-up time, which extends the mathematical results by Masuda and mathematically proves numerical observations by Cho et al. The second one is global existence of the solution to the Cauchy problem of \eqref{eq:CGL} for some $\theta$. In particular, the initial data is given by $u_{0}(x)=50(1-\cos (2 \pi x))$, which is relatively large and far from a constant function. Therefore, this result agrees with Masuda's work without the assumption of closeness to a constant. The proofs of these results are obtained by the tools of \emph{rigorous numerics} such as rigorous integrator of time-dependent PDEs and numerical validation of center-stable manifolds. 
For further  work using complexification to regularize singularities see  \cite{kevrekidis2017infinity} and the  references therein.

In contrast to the strictly real version of \eqref{eq:CF}, the complex PDE in \eqref{eq:CGL} possesses  a diverse array of global solutions and a rich dynamical structure.  
%As shown in \cite{takayasu2019rigorous} for $\theta = \pi/3, \pi/4, \pi/6, \pi/12$, there exists relatively large non-negative initial data which exists globally and converges to zero.  
In \cite{Jaquette2020} it was shown that when $ \theta = \pi/2$ this equation has non-trivial complex-valued equilibria, whose profiles are shown in Figure \ref{fig:equilibria}. 
One may immediately see that the equilibria of \eqref{eq:CGL} are independent of $\theta$. 
In \cite{Jaquette2020} it was also shown that there exists  an open set of homoclinic connections at the zero function, that is
solutions which limit to the zero in both forward and backward time. 
Additionally in \cite{jaquette2021quasiperiodicity} it is shown that for  $ \theta = - \pi /2$, there are periodic orbits, and large  monochromatic initial data which blows up in the $L^2$ norm.

\begin{figure}[htbp]
	\centering
	\includegraphics[width = .48 \textwidth]{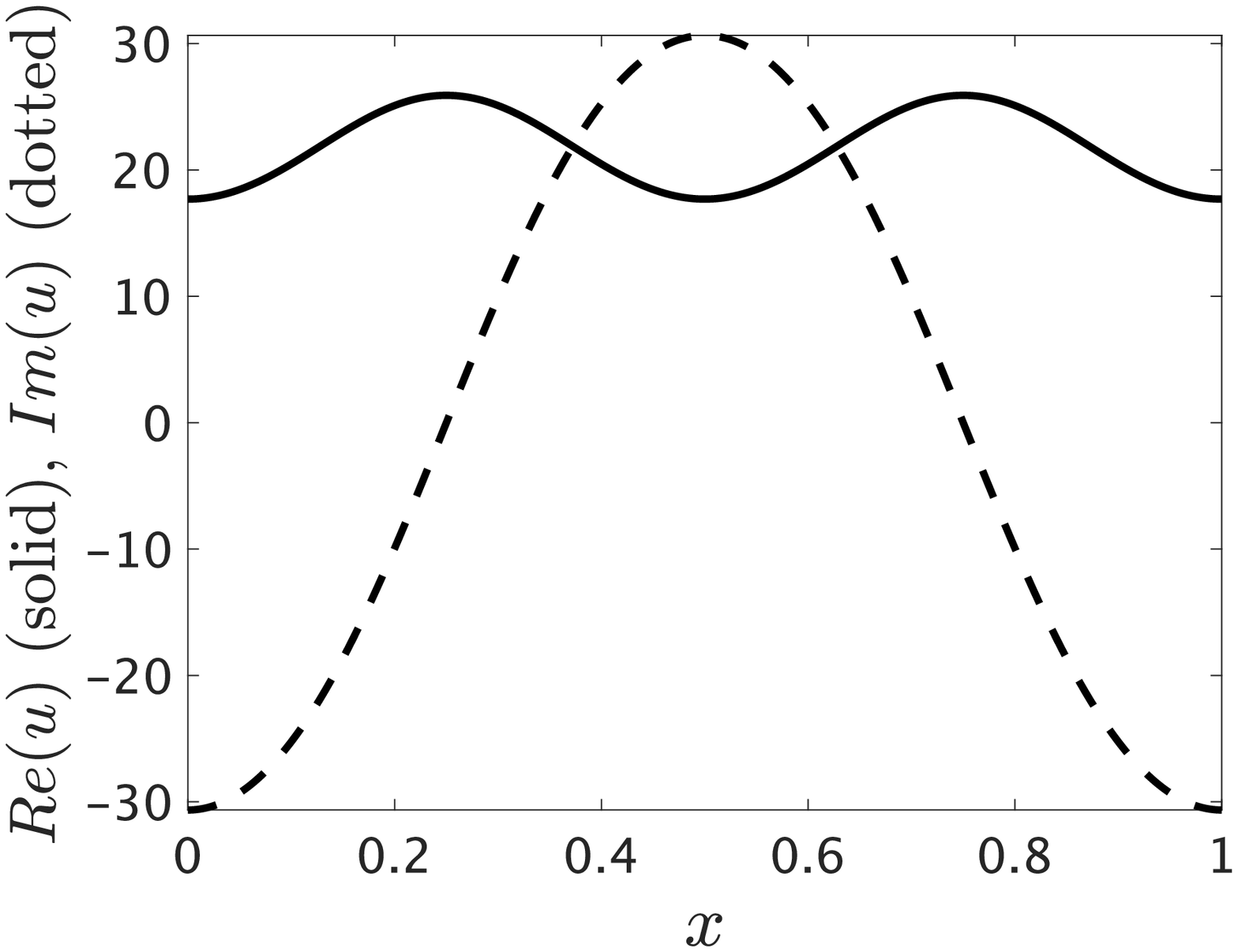}
	\includegraphics[width = .48 \textwidth]{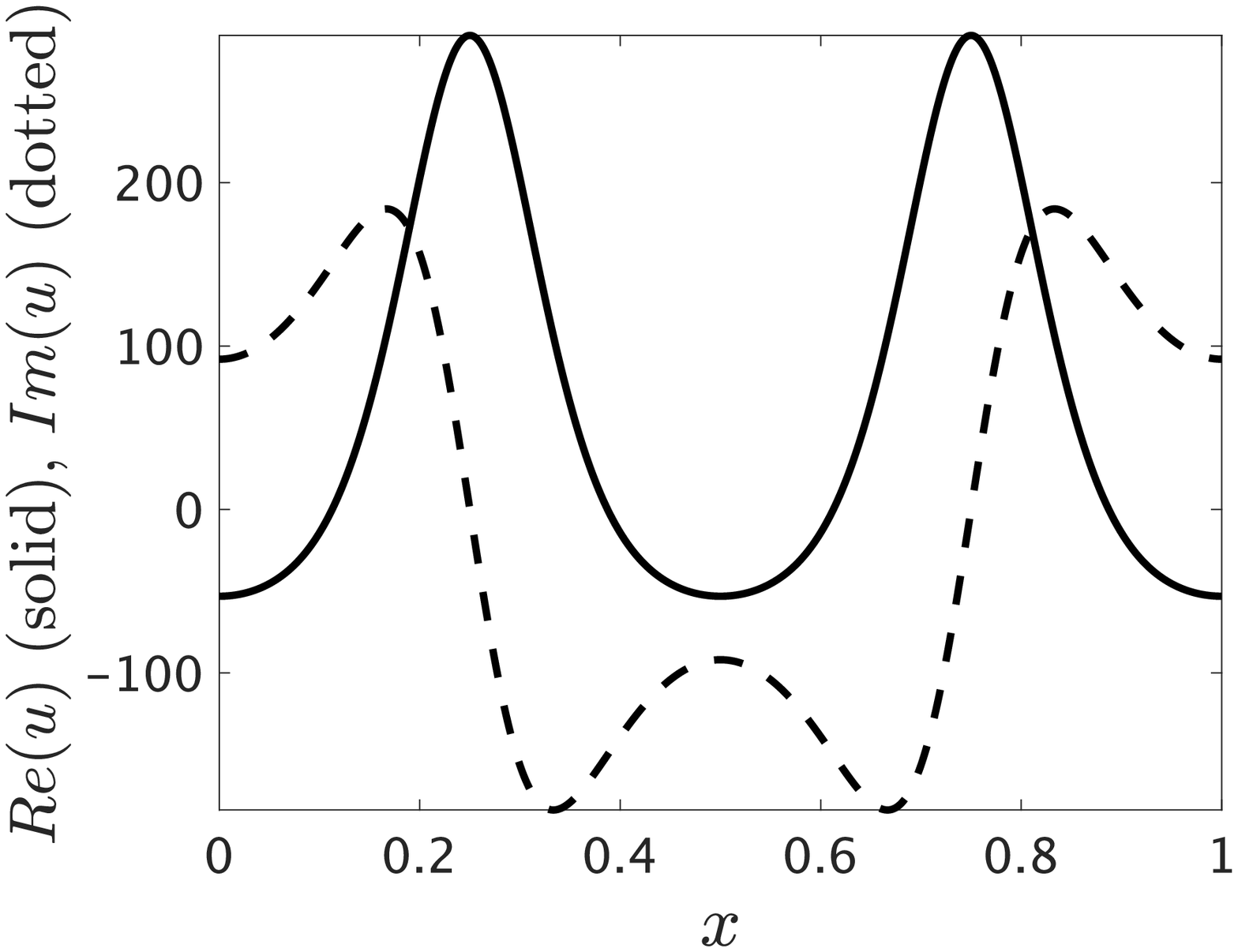}
	\caption{Two different nontrivial equilibria: $u_1(x)$ (left), $u_2(x)$ (right), the existence of which are established via computer-assisted proof \cite[Theorem 1.7]{Jaquette2020}. 
		%\JJ{NOTE TO AUTHORS: I removed the indexing on our rescaled solutions because we don't really use it anywhere. I've changed the notation so that $u^i_1(x) \mapsto u_1$ and  $u^{ii}_1(x) \mapsto u_2$ }
	}\label{fig:equilibria}
\end{figure}

Note also that since \eqref{eq:CGL} has a nonlinearity of homogeneous degree  then there exists a rescaling of solutions. 
That is, if $u(t,x)$ solves \eqref{eq:CGL} then
\[
\tilde{u}(t, x) \bydef n^{2} u(n^{2} t, n x), \quad \forall n \in \mathbb{Z}
\]
solves \eqref{eq:CGL} as well. 
Hence the nontrivial equilibria proved to exist in \cite{Jaquette2020} in fact generate infinite families of equilibria of arbitrarily large norm. 
Furthermore, if there exists a heteroclinic orbit between the base nontrivial equilibria and the zero function, then it yields existence of a countable infinite number of heteroclinic orbits.  
This is in fact the case when $\theta = \pi/2$.

\begin{theorem}[\!\!{\cite[Theorem 1.9]{Jaquette2020}}]\label{thm:Heteroclinics1}
	Consider the case of $\theta=\pi/2$ in \eqref{eq:CGL}.
	Let $u$ be any one of the equilibria constructed from $u_1(x)$ or $u_2(x)$ in Figure \ref{fig:equilibria}.	There exist heteroclinic orbits  $u_a$ and $u_b$ to \eqref{eq:CGL} such that 
	\begin{align*}
		\lim_{t \to -\infty} u_a(t) &= u,&
		\lim_{t \to +\infty} u_a(t) &= 0,
		& & &
		\lim_{t \to -\infty} u_b(t) &= 0,
		&
		\lim_{t \to +\infty} u_b(t) &=  u,
	\end{align*}
	converging exponentially fast to $u$ and algebraically fast to $0$. 
	% 	with convergence in the analytic  norm. 
\end{theorem}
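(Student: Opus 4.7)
My plan is to construct $u_a$ by splicing together three validated numerical pieces: an initial segment on the local unstable manifold at $u$, a middle segment obtained by rigorous integration, and a final segment trapped near $0$. The orbit $u_b$ is then obtained essentially for free from the time-reversal symmetry that \eqref{eq:CGL} enjoys when $\theta = \pi/2$. Throughout, ``rigorous'' means enclosed in a computable set using interval arithmetic.

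First I would analyze the linearization $Lv = i(v_{xx} + 2uv)$ at the equilibrium $u$. Any smooth real periodic solution of $u_{xx} + u^2 = 0$ satisfies $u'' \le 0$ and therefore, by integration over the circle, $u'' \equiv 0$, whence $u \equiv 0$; so the nontrivial equilibrium $u$ is genuinely complex-valued, $L$ is not skew-adjoint, and one expects eigenvalues of positive real part. A Galerkin truncation together with a radii polynomial (or equivalent) validation gives rigorous enclosures of the unstable eigenpairs $(\lambda_j, \xi_j)$. Feeding these into the parameterization method produces a polynomial ansatz $P \colon \overline{B_\rho} \subset \C^d \to X$ conjugating the semiflow on $W^u_{\mathrm{loc}}(u)$ to the linear flow $e^{\Lambda t}$; a Newton--Kantorovich argument validates $P$ with rigorous truncation bounds. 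Exponential convergence backward in time to $u$ is then immediate from the estimate $\|P(e^{\Lambda t} z_0) - u\| \lesssim e^{-\mu |t|}$ provided by this conjugacy.

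Next, I would pick a point $w_0 = P(z_0)$ on the boundary of the validated patch and push it forward by a rigorous PDE integrator (a Fourier--Chebyshev or Fourier--Taylor scheme in the spirit of \cite{takayasu2019rigorous}) until the enclosing tube enters a small target neighborhood of $0$. Simultaneously, I would construct a trapping region $\mathcal{T}$ around $0$ --- a sublevel set of a tailored Lyapunov-type functional --- such that any solution entering $\mathcal{T}$ exists globally forward in time and decays algebraically to $0$. Since the linearization at $0$ is $i\partial_x^2$, with purely imaginary spectrum $\{-i(2\pi k)^2\}_{k\in\Z}$, decay cannot come from hyperbolicity; it must instead be extracted from nonlinear interactions, for instance from the fact that the mean Fourier mode evolves approximately by $a_0' = i a_0^2$, yielding $|a_0(t)| \sim 1/t$ provided $\mathrm{Im}(a_0) > 0$, while the higher modes are controlled by the oscillation of the linear semigroup.

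To obtain $u_b$, I would apply the same three-step construction to the conjugate equilibrium $\overline{u}$ (which is also an equilibrium, since conjugating $u_{xx} + u^2 = 0$ gives $\overline{u}_{xx} + \overline{u}^2 = 0$), producing a heteroclinic $\widetilde{u}_a$ from $\overline{u}$ to $0$, and then set $u_b(t,x) \bydef \overline{\widetilde{u}_a(-t,x)}$. A direct computation shows that $u_b$ solves \eqref{eq:CGL} at $\theta = \pi/2$ and satisfies $u_b(t) \to 0$ as $t \to -\infty$, $u_b(t) \to u$ as $t \to +\infty$, with the required rates. The main obstacle of the whole program is the construction and matching of $\mathcal{T}$: without a linear contraction to lean on, $\mathcal{T}$ must be large enough to contain the unavoidable accumulated interval-arithmetic error at the end of the rigorous integration, yet small enough that the nonlinear algebraic-decay estimate closes. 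Calibrating the depth $|z_0|$ on the unstable manifold --- close enough to $u$ that the parameterization is accurate, but far enough out that the subsequent integration time to reach $\mathcal{T}$ stays tractable --- is the other delicate quantitative step.
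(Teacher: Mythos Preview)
Your proposal is correct and follows essentially the same three-part architecture the paper describes (and that is carried out in detail in the cited reference \cite{Jaquette2020}): a validated parameterization of $W^u_{\mathrm{loc}}(u)$, rigorous forward integration, and a trapping region near $0$ built from the explicit zero-mode dynamics $\dot z = i z^2$ rather than from linear stability. Note that the present paper does not itself prove this theorem but quotes it; the only substantive difference from the paper's one-line sketch is that for $u_b$ you explicitly run the CAP a second time at $\overline{u}$ before invoking the reversibility $v(t,x) = \overline{u(-t,x)}$, whereas the paper simply says $u_b$ ``is obtained as a corollary'' of reversibility --- which amounts to the same thing once one notes that the family of equilibria generated by $u_1,u_2$ is closed under conjugation.
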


The heteroclinic orbit $u_a$ is obtained by  combining three separate techniques of computer-assisted proofs: an enclosure of a local unstable manifold at the equilibria, a rigorous integration of the flow (starting from the unstable manifold) and a proof that the solution enters a validated stable set.  
The heteroclinic orbit $u_b$ is then obtained as a corollary, relying on the fact  that when $ \theta = \pi /2 $ then \eqref{eq:CGL} is a nonlinear Schr\"odinger equation, and moreover a reversible equation. 
In the case  $\theta\in(-\pi/2,\pi/2)$ we would expect the heteroclinic orbit $u_a$, although probably not $ u_b$, to persist.

%In order to show this and to elucidate the dynamics in more detail, we will study heteroclinic orbits coming out of the nontrivial equilibrium. 

%\JJ{Fix!!}

%On the other hand, these heteroclinic orbits are expected to exist in the case of $\theta\in(-\pi/2,\pi/2)$ as well, although it is impossible to exist time-reversed orbits for $\theta\neq \pi/2$.
%the similar dynamics of the solution is expected to be obtained in the case of $\theta\in(-\pi/2,\pi/2)$.

In this paper, we perform a systematic study of the long term behavior of trajectories on the unstable manifold of the equilibrium $u_1$ in Figure \ref{fig:equilibria} for $\theta \in \{0,\pi/4 ,\pi/2\}$.   
To focus our analysis we will restrict ourselves to studying a 1-complex dimensional submanifold. 
We will note however that while $ u_1$ is unstable for all $\theta \in [-\pi/2,\pi/2]$, its Morse index (that is the number of eigenvalues with positive real part), and hence the dimension of its unstable manifold does not stay constant as $\theta$ varies.

Indeed, for the equilibrium solution $u_1$ to \eqref{eq:CGL}, the eigenvalue problem is to find a pair $ ( \tilde{\lambda}, \tilde{h})$ such that
\begin{align}\label{eq:linearized_eigprob}
	e^{i \theta } \left(
	\triangle \tilde{h} + 2 u_1 \tilde{h} 
	\right)
	= \tilde{\lambda} \tilde{h}.
\end{align}
If there exists an eigenpair $(\tilde{\lambda},\tilde{h})$ for $ \theta_0= 0$, then $(e^{i \theta} \tilde{\lambda} , e^{i \psi} \tilde{h})$ will be an eigenpair for any $ \theta \neq 0$ and $ \psi \in [0, 2 \pi]$. 
For $\theta_0 = 0$, we numerically calculate the eigenvalues with the largest real part to be: 
\begin{align}
	\lambda_1 & \approx 17.696 + 35.391 i \\
	\lambda_2 &\approx 17.696 - 35.391 i \\
	\lambda_3 &\approx -1.1109\times10^{2} % - 2.0442\times10^{-14}  i 
	\\
	\lambda_4 &\approx-3.1038\times10^{2}.% - 4.1495\times10^{-14}  i
\end{align}
The rest of the eigenvalues appear to have zero imaginary part, see Figure \ref{fig:eigenvalues}, but this we cannot prove.

\begin{figure}[htbp]
	\centering
	\includegraphics[width = .8 \textwidth]{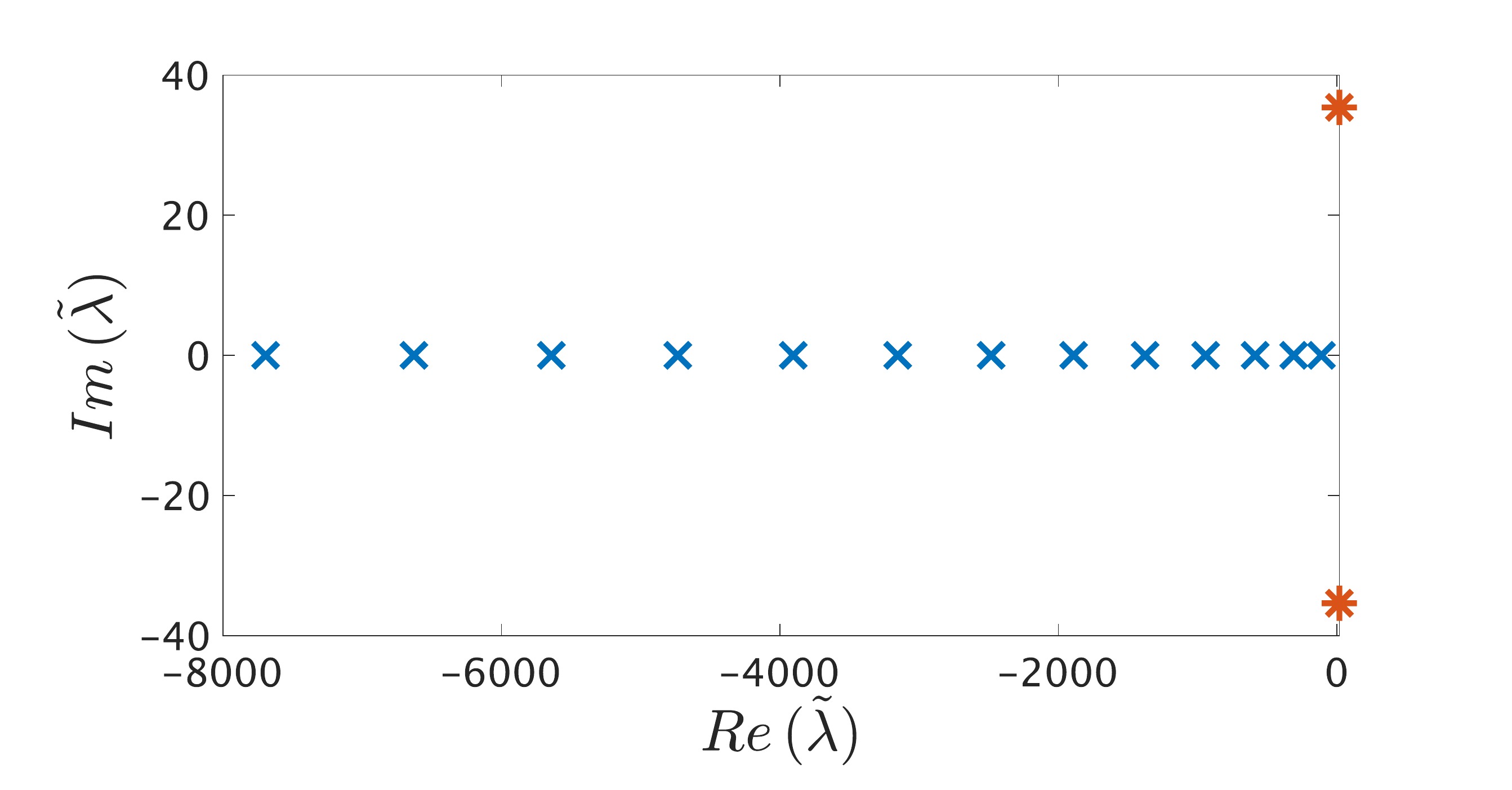}
	\caption{Numerically computed (non-rigorous) eigenvalues $\tilde{\lambda}$ of \eqref{eq:linearized_eigprob} for $ \theta = 0$. There are two unstable eigenvalues ${\lambda}_1$ and ${\lambda}_2$ (asterisk). The others (crosses) are stable and located on the real axis.}\label{fig:eigenvalues}
\end{figure}

Given the rescaling of eigenvalues by the parameter $\theta$ and our calculation of the eigenvalues when $ \theta = 0$, we can numerically determine the Morse index of the equilibrium. 
Then at some $\theta = \theta^* \approx 0.463$, there is one unstable eigenvalue, one purely imaginary eigenvalue, and infinitely many stable eigenvalues.
%\footnote{To calculate $\theta^*$, suppose that $\lambda_1 = R_1 e^{ i \phi}$ where $ \phi = \arctan(35.391/17.696) \approx 1.107$\,[\si{\radian}]. Then $\theta^* = \pi / 2 - \phi \approx 0.463$. } 
Furthermore, for $ |\theta| \in [0, \theta^*)$ this equilibrium has two unstable eigenvalues, and for   $ |\theta| \in ( \theta^*, \pi /2]$ the equilibrium has only one unstable eigenvalue. 
Hence, for all $ \theta \in [0, \pi/2]$ the equilibrium will have an unstable manifold of complex dimension of $1$ or $2$.

%We call the set $W_{\rm loc}^{u,\tilde \lambda,\theta}(\ta,\tb)$ in \eqref{eq:local_unstable_set} the {\em local unstable manifold at $\theta$ associated to $\tilde \lambda$ in the direction $\tb$}. Note that $W_{\rm loc}^{u,\tilde \lambda,\theta}(\ta,\tb)$ is a sub-manifold of the unstable manifold $W^u(\ta)$.

To restrict our analysis to something more tractable, we focus on the equilibrium $u_1(x)  $ shown in Figure~\ref{fig:equilibria}.
Then we will consider just the 1-complex dimensional unstable manifold of the equilibrium associated to the eigenvalue with the largest positive real part (sometimes called the {\em strong} unstable manifold).  
Let $\psi_k\bydef(2\pi/360)k$ be an indexed parameter by integer $k\in\N$.
We will study solution behavior coming out of the 1-complex dimensional unstable manifold associated with the eigenpair $(e^{i \theta} \tilde{\lambda} , e^{i \psi_k} \tilde{h})$.
The index $k$ is taken from $0$ to $360$ so that we go around the manifold to investigate the solution behavior.
Along this {\em circular} organization of the unstable manifold, we are able to show that many points will converge to zero. (These are just for particular angles. While the proof does extend to some open set about each angle, this is not the precise theorem we are stating here.) 
However for other angles we are not able to complete the proof. Below is what we can rigorously prove.

\begin{remark} \label{rem:fixing_the_eigenpair}
	Throughout the paper we work with a fixed eigenpair $(  e^{i \theta} \tilde{\lambda}, e^{i \psi_k} \tilde{h})$ such that $ Re(  e^{i \theta} \tilde{\lambda}) > 0$ for all $ \theta \in [ 0, \pi/2]$. 
	This selection of eigenvalue $\tilde{\lambda}$ corresponds with the lower red asterisk in Figure \ref{fig:eigenvalues}. 
	For the choice of eigenvector  $\tilde{h}$, which in this case is neither unique in its phase nor its amplitude, we have fixed a particular choice in our code  \cite{bib:codes}. 
\end{remark}

\begin{theorem} \label{prop:ManyHeteroclinics}
	Consider $u_1$ the nontrivial equilibrium to equation \eqref{eq:CGL} and fix the eigenpair as described in Remark~\ref{rem:fixing_the_eigenpair}. For $ \theta \in \{0,\pi/4,\pi/2\}$, let $ P_\theta$ be a coordinate chart for a 1-complex dimensional submanifold of the unstable manifold. 
	
	\begin{itemize}
		\item[(a)] For $ \theta = 0$ we prove that there exists a heteroclinic orbit from $u_1$ to $0$ (through the point $ P_\theta( e^{i \psi_k})$) for each integer  degree angle $ 0 \leq k \leq 360$ except for the range $321 \leq  k  \leq 357$. 
		\item[(b)] For $ \theta = \pi/4$ we prove that there exists a heteroclinic orbit from $u_1$ to $0$ (through the point $ P_\theta( e^{i \psi_k})$) for each integer  degree angle $ 0 \leq k \leq 360$ except for the range $311 \leq  k  \leq 332$.
		\item[(c)] For $ \theta = \pi/2$ we prove that there exists a heteroclinic orbit from $u_1$ to $0$ (through the point $ P_\theta( e^{i \psi_k})$) for each integer  degree angle $ 0 \leq k \leq 360$ except for the range $214 \leq  k  \leq 321$.
	\end{itemize}
	
\end{theorem}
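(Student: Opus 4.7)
The plan is to follow the three-step computer-assisted strategy already used for Theorem 1.1 (cited as \cite[Theorem 1.9]{Jaquette2020}), applied point-by-point to each angle $\psi_k$: (i) construct and rigorously enclose a local patch of the strong unstable manifold of $u_1$ parameterized by $P_\theta$, (ii) rigorously integrate the PDE \eqref{eq:CGL} forward in time from the boundary of this patch, and (iii) verify that the integrated enclosure enters a trapping neighborhood of $0$ from which all trajectories converge to $0$. The non-perturbative issue is that only (ii) changes qualitatively with $\theta$ and with $k$; steps (i) and (iii) are essentially uniform in these parameters (modulo a few quantitative constants), so the failure ranges $[321,357]$, $[311,332]$ and $[214,321]$ should correspond precisely to orbits whose mid-flow excursion is too large for the validated integrator to resolve within its working tolerance.

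For step (i), I would use the parameterization method: seek a Taylor expansion $P_\theta(\sigma) = u_1 + \sum_{n\ge 1} a_n \sigma^n$ (with $\sigma$ a complex variable) conjugating the semiflow restricted to the strong unstable direction to the linear map $\sigma \mapsto e^{e^{i\theta}\tilde\lambda t}\sigma$, using the fixed eigenpair from Remark~\ref{rem:fixing_the_eigenpair}. The coefficients $a_n$ satisfy a homological equation that decouples because we have chosen a single eigenvalue (so there are no internal resonances among the selected strong unstable direction and itself). Existence, analyticity and a $C^0$ enclosure of the truncation error on a disk $\{|\sigma| \le \delta\}$ follow from a standard Newton–Kantorovich/radii-polynomial argument in a Banach algebra of rapidly decaying Fourier sequences. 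This yields rigorous enclosures of the manifold points $P_\theta(\delta e^{i\psi_k})$ that serve as initial data for the integrator.

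For step (ii), I would apply a rigorous spectral-Galerkin PDE integrator on the periodic torus, exactly of the type developed for \eqref{eq:CF} in \cite{takayasu2019rigorous}: project onto a truncated Fourier basis, integrate a Chebyshev/Taylor time-series enclosure of the finite-dimensional ODE using interval arithmetic, and control the infinite-dimensional tail via a contraction argument in a weighted $\ell^1$ space. For each $k$ I would propagate the enclosure of $P_\theta(\delta e^{i\psi_k})$ forward in time over a sequence of time steps, restarting the integrator and shrinking the spatial enclosure as needed, until the enclosure lies inside the trapping region from step (iii). The main obstacle is here: for angles where the orbit makes a large excursion (large $L^\infty$ norm, or nearly touching a different invariant object) the exponential growth of the enclosure defeats the integrator, which is precisely what produces the excluded $k$-ranges.

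For step (iii), I would construct a concrete trapping region $\mathcal{T}$ about $0$ in the Fourier-norm space, expressed as a product of disks on low modes and an exponentially weighted tail bound on high modes, and verify by a finite interval computation that on $\partial \mathcal{T}$ the vector field points inward (using positivity of the heat-dissipation term for the high modes and a direct check for the low modes). Any trajectory entering $\mathcal{T}$ then stays there and, by a Lyapunov argument using $\|u\|$ or $\|u\|^2$ plus the Rotation factor $e^{i\theta}$, decays algebraically to $0$, yielding the desired heteroclinic $u_1 \to 0$ through $P_\theta(e^{i\psi_k})$. Running steps (i)–(iii) for every integer $k\in[0,360]$ and recording the successful indices produces exactly the stated lists; the gaps are reported as limitations of the numerical precision rather than obstructions to existence.
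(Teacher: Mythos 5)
Your steps (i) and (ii) mirror the paper's: a parameterization-method/radii-polynomial enclosure of the strong unstable manifold, and a fixed-point-based rigorous integrator propagating that enclosure forward in time. Step (iii), however, as you have stated it, would not work, and this is not a minor detail but the crux of why the trapping-region step is nontrivial here.

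Your plan is to build a product-of-disks neighborhood $\mathcal{T}$ of $0$ and check that the vector field points inward on $\partial\mathcal{T}$, leaning on ``positivity of the heat-dissipation term for the high modes and a direct check for the low modes.'' This fails for two reasons. First, the zero equilibrium of \eqref{eq:CGL_ODEs} is \emph{not} hyperbolic: the linearization has eigenvalues $-4\pi^2 e^{i\theta}k^2$, so the $k=0$ mode carries a zero eigenvalue. On the invariant one-dimensional subspace of spatially constant solutions the flow is $\dot z = e^{i\theta} z^2$, which for initial data with $\mathrm{Re}(e^{i\theta}z_0)>0$ blows up in finite time. Thus no disk $\{|a_0|\le\rho\}$ in the zeroth mode can be forward invariant, and the vector field does not point inward on any sphere around $0$. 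A Lyapunov argument based on $\|u\|$ or $\|u\|^2$ near $0$ simply does not close. Second, in the case $\theta=\pi/2$ every eigenvalue $-4\pi^2 i k^2$ has zero real part, so there is no heat dissipation to invoke for the high modes at all. The paper's Theorem~\ref{thm:HomoclinicBlowup} circumvents both problems by (a) restricting the zeroth Fourier coefficient to the \emph{half}-disk $B(\rho_0)=\{z:\mathrm{Re}(e^{i\theta}z)\le 0,\ |z|\le\rho_0\}$, on which the constant-mode dynamics \eqref{eq:PpowerSolution} converges to $0$ algebraically, and (b) fibering the remaining modes over that direction with a quadratic bound $|\phi|_\nu\le\rho_1|z_0|^2$ and a blowup-type change of variables that tracks the solution relative to $\zeta(t)$ rather than relative to $0$. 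You need this structure; without it the ``trapping region'' is not a trapping region, and the last step of the proof is missing.

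Two smaller points: the paper's integrator is formulated via the evolution operator $U(t,s)$ of the linearized problem (Theorem~\ref{thm:local_inclusion}), not a Chebyshev/Taylor time-series argument, though both realize a contraction in $C(J;\ell^1_\nu)$; and you should be explicit that the parameterization coefficients $p_m$ are solved nonresonantly from \eqref{eq:f=0_manifold} because $\tilde\lambda m + 4e^{i\theta}k^2\pi^2\neq 0$ for $m\ge 2$ (this is automatic from $\mathrm{Re}(e^{i\theta}\tilde\lambda)>0$), which is what makes your homological-equation step go through.
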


\begin{figure}[htbp]
	\centering
	\hfill 
	\includegraphics[width = .45 \textwidth]{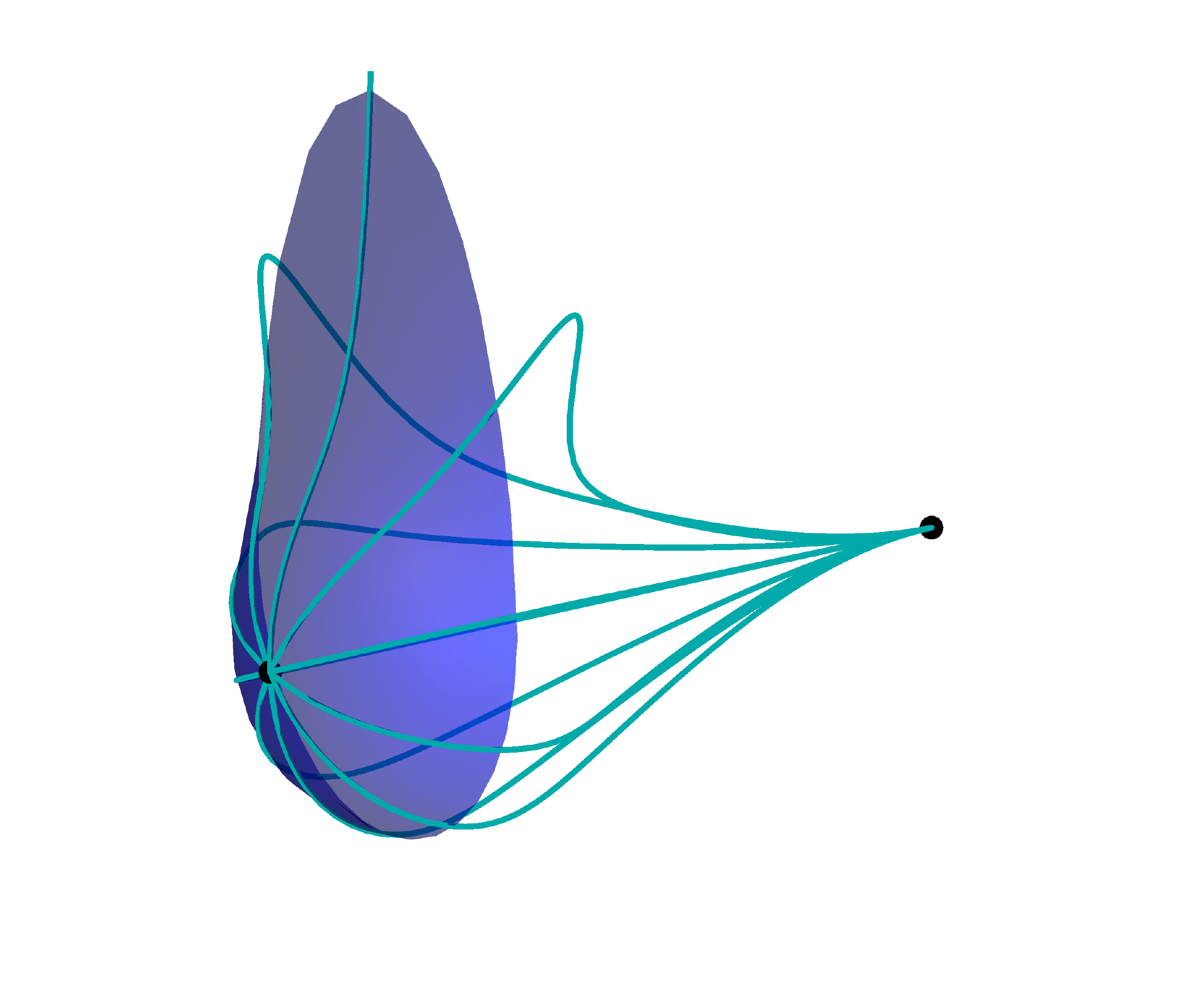} 
	\hfill 
	\includegraphics[width=.49\textwidth]{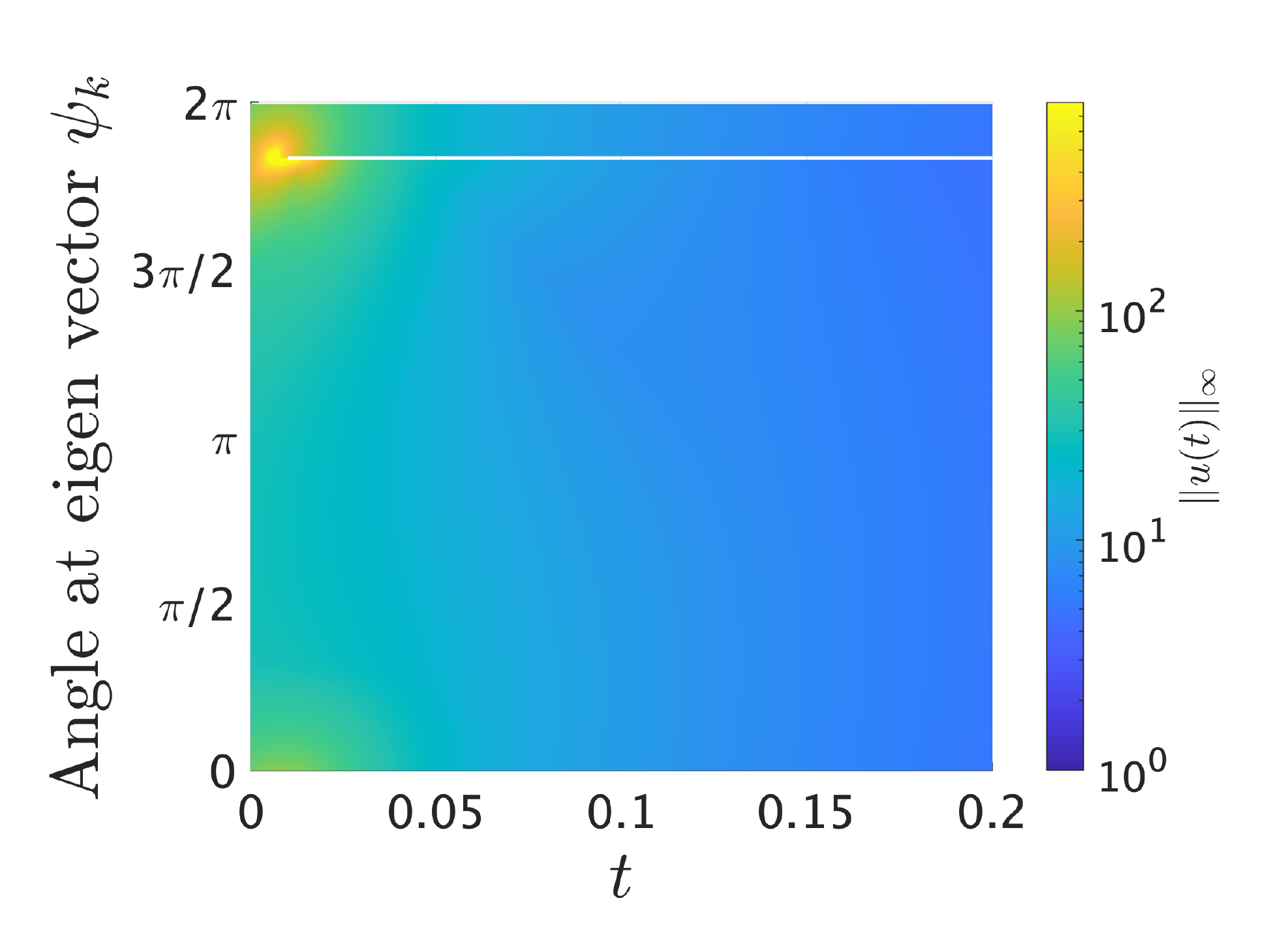}
	\hfill 
	\null
	
	\caption{(left) A cartoon model of the unstable manifold of a non-trivial equilibrium. While most trajectories on the manifold converge to the trivial equilibrium, it appears that for a particular angle coming out of the unstable manifold, that trajectory will blowup in finite time. (right) We plot values of time-dependent supremum norm of solutions on the unstable manifold. Varying the angles $\psi_k$, it seems that most trajectories on the manifold converges to zero except for some angles. The solution profile looks blowup numerically. As the cartoon on the left suggests, there could exist blow-up solution of \eqref{eq:CGL}.
	}\label{fig:onion}
\end{figure}

For  angles $\psi_k$ at which we are unable to obtain a computer-assisted proof (CAP), the long term behavior of the solution is unclear. One possibility is that the solution will eventually converge to zero, however the precision of our computation was insufficient to produce a CAP. Another possibility is that the solution will grow without bound, either in finite time (a blowup solution) or in infinite time (a growup solution). We suspect the angles where our CAP fails to form a neighborhood about a growup/blowup solution, see Figure \ref{fig:onion}. For example, in our purely numerical calculations at $ \theta = \pi/4$ our non-rigorous integrator failed at only a single angle $\psi_{330}$. Furthermore, solutions on either side of this {\em non-proof interval} display qualitatively different behavior suggestive of a branching singularity, see Figure \ref{fig:CO}. We note that our CAP approach succeeded in proving non-trivial solution behaviors for the angle $\psi_{333}$, whose amplitude is significantly large. 

\begin{figure}[htbp]
	\centering
	\includegraphics[width = .85 \textwidth]{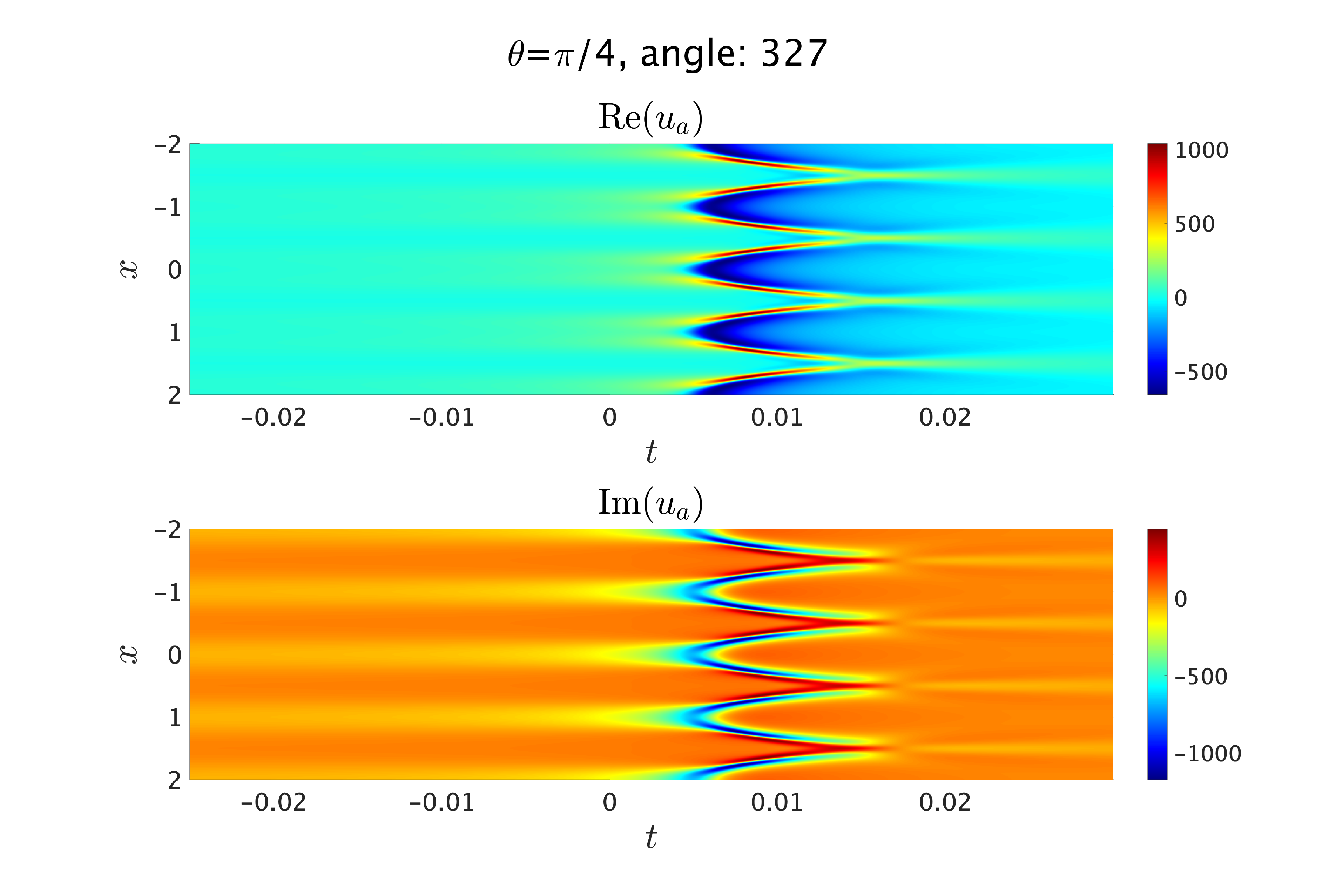}\\[-4pt]
	($a$) A part of heteroclinic connection from $u_1(x)$ to $0$ at the angle $\psi_k$ with $k=327$. 
	\includegraphics[width = .85 \textwidth]{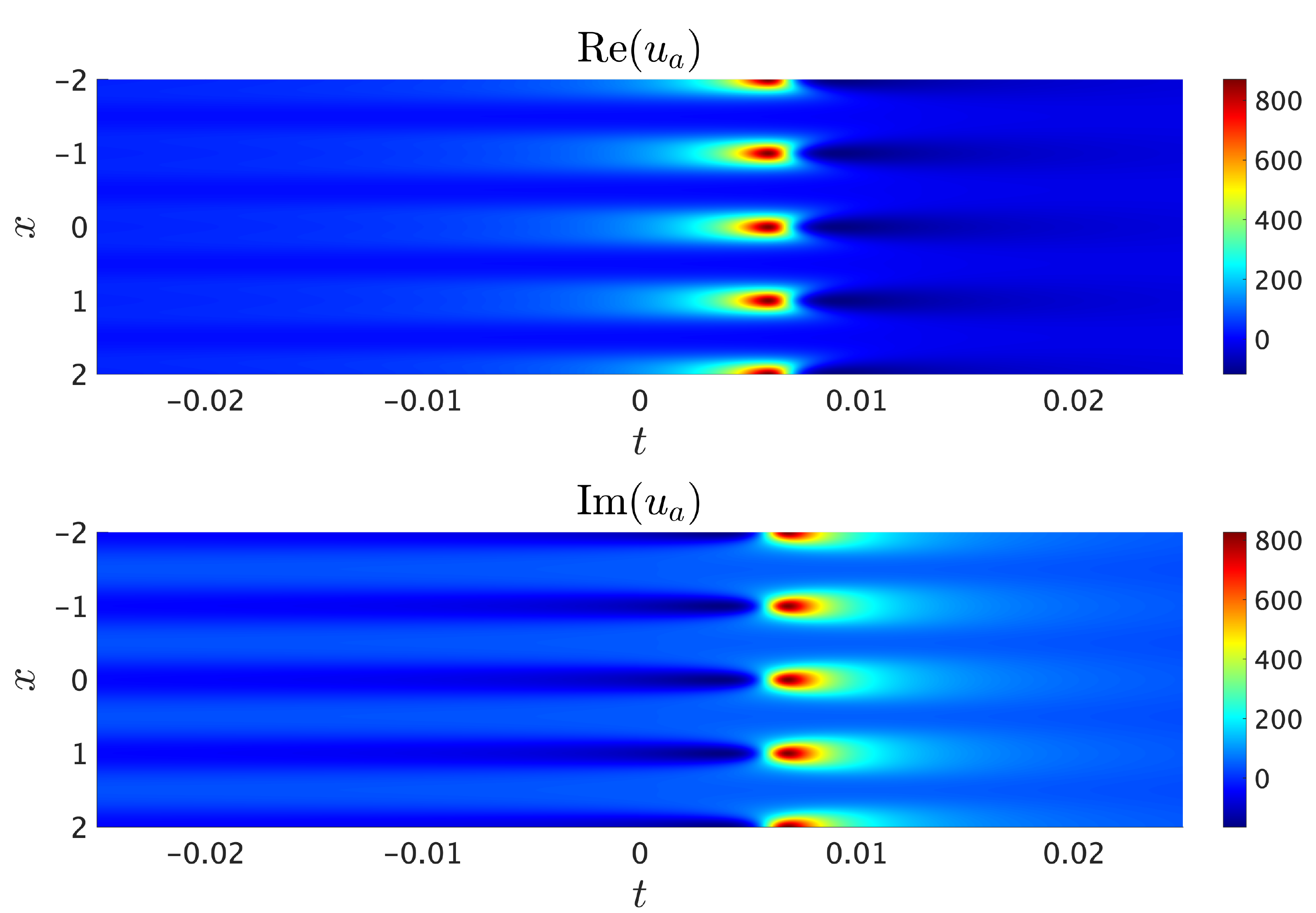}\\[-4pt]
	($b$) A part of heteroclinic connection from $u_1(x)$ to $0$ at the angle $\psi_k$ with $k=333$.
	\caption{Solution profile of heteroclinic orbits for $\theta=\pi/4$. (a) Profile for $\psi_{327}$, for which our computer-assisted proof fails. (b) Profile for $\psi_{333}$, for which we succeeded in proving the existence of the orbit. These solutions seem to behave quite differently. Here the orbit is zoomed in near $t=0$ for ease of viewing and we extend the $x$ variable to $x\in [-2,2]$ in order to clearly understand the time evolution of the solution.}\label{fig:CO}
\end{figure}

By using validated numerics we are able to be notified of the possible existence of singularities that would otherwise be subtle to detect.  
If we had used a coarser resolution of angles $\psi_k$ in our  calculation of Figure \ref{fig:onion} (right) we may have missed the single angle $ \psi_{330}$ where  our non-rigorous numerics break down. 
(In fact for $\theta =0$ our non-rigorous computations did not breakdown for any of the angles $ \psi = 0 ,1, \dots 359$.)  
While evidence of a branching singularity can still be found in a faint discontinuity in the graph of $ \|u(t)\|_{\infty}$ (as $t$ and $ \psi_k$ are varied), it is a subtle matter to verify. In contrast, by using validated numerics we are readily alerted to a possible singularity.   
However with the present methods it is unclear how to distinguish between an isolated singularity, and singularities occurring on a very small interval of $ \psi$.

Furthermore, when $\theta \in \{ 0,\pi/4\}$ we are able to prove the existence of unbounded solutions.

\begin{theorem}
	\label{prop:GrowUp}
	Consider the non-trivial equilibrium $u_1$ for \eqref{eq:CGL}. Let $ \theta \in\{ 0 , \pi/4\}$. There exists a point in the unstable manifold of $u_1$ through which the solution is unbounded in positive time. 
\end{theorem}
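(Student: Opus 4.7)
The plan is to argue by contradiction, exploiting the complex-analytic structure of the unstable manifold together with Montel's theorem and the identity principle for holomorphic functions. By the parameterization method underpinning the CAPs of Theorem~\ref{prop:ManyHeteroclinics}, there is a holomorphic chart $P_\theta \colon D' \to X$ of the 1-complex-dimensional (strong) unstable manifold of $u_1$, where $D' \subset \C$ is an open disk strictly containing the closed unit disk $\bar D$, and $P_\theta(0) = u_1$. Because the nonlinearity in~\eqref{eq:CGL} is polynomial in $u$, the time-$T$ solution map $\Phi_T$ depends complex-analytically on its initial data wherever defined. Fix a bounded $\C$-linear functional $\phi \colon X \to \C$ with $\phi(u_1) \neq 0$ (for instance a suitable Fourier coefficient), and set
\begin{equation*}
F_T(z) \bydef \phi\bigl(\Phi_T(P_\theta(z))\bigr),
\end{equation*}
which is holomorphic in $z$ on the subset of $D'$ where the forward trajectory from $P_\theta(z)$ exists up to time $T$. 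Since $u_1$ is an equilibrium, $F_T(0) = \phi(u_1) \neq 0$ for every $T \geq 0$.

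Suppose for contradiction that every forward trajectory starting in $P_\theta(\bar D)$ is globally defined and uniformly bounded in $X$. Then for each $T \geq 0$ the function $F_T$ is holomorphic on a neighborhood of $\bar D$, and the family $\{F_T\}_{T \geq 0}$ is uniformly bounded on $\bar D$. Montel's theorem produces a sequence $T_n \to \infty$ along which $F_{T_n} \to F_\infty$ uniformly on compact subsets of $D'$, where $F_\infty$ is holomorphic on $D'$ and $F_\infty(0) = \phi(u_1) \neq 0$.

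To obtain the contradiction, I would invoke Theorem~\ref{prop:ManyHeteroclinics} together with the observation (noted in the discussion preceding that theorem) that each integer-angle CAP is open: the convergence $\Phi_T(P_\theta(e^{i\psi})) \to 0$ persists for $\psi$ in a neighborhood of each proven $\psi_k$. Because consecutive good $\psi_k$'s are a single degree apart, their neighborhoods overlap and the union of the ``good'' set contains a genuine open arc $\Gamma \subset \partial D$. Continuity of $\phi$ gives $F_{T_n}(e^{i\psi}) \to 0$ for $e^{i\psi} \in \Gamma$, so $F_\infty$ vanishes on $\Gamma$. Since $\Gamma$ has accumulation points in $D'$, the identity theorem forces $F_\infty \equiv 0$ on $D'$, contradicting $F_\infty(0) = \phi(u_1) \neq 0$. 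Therefore some $z_\ast \in \bar D$ must have an unbounded forward trajectory, yielding the desired unbounded solution on the unstable manifold of $u_1$.

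The main obstacle I expect is making the ``openness'' step genuinely rigorous: one must upgrade the pointwise heteroclinic CAPs at integer angles $\psi_k$ into a truly open set of $\psi$ for which the orbit is globally defined and enters the validated trapping region around $0$, and then check that enough consecutive good angles exist for the resulting neighborhoods to form an open arc rather than a Cantor-type set (for which the identity theorem would not apply). A secondary, but still delicate, point is verifying the strict holomorphic extension $D' \supsetneq \bar D$ of $P_\theta$, since only then does Montel's theorem supply convergence up to and including $\partial D$. One can also see why $\theta = \pi/2$ is excluded: the gap between $k = 214$ and $k = 321$ is so wide that the remaining arc of proven angles is too small to conclude without tightening the CAPs, and in the Schr\"odinger regime one can no longer exclude bounded recurrent $\omega$-limit sets that might otherwise conflict with the forcing.
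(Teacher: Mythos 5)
Your overall strategy is the same as the paper's: suppose all forward orbits out of the strong unstable chart are bounded, project by a bounded linear functional $\phi$ with $\phi(u_1)\neq 0$, apply a normal-family argument (Montel/Vitali) to the time-$n$ maps, and contradict $\phi(u_1)\neq 0$ via the identity theorem. The paper's proof (following Stuke) is essentially this. However, your implementation departs from the paper in a way that creates the very obstacles you then flag as problematic, and these obstacles are actually unnecessary.

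The main difference is where you choose the ``zero set'' for the identity theorem. You place it on the boundary circle $\partial D$, which forces you to (i) extend $P_\theta$ holomorphically to a strictly larger disk $D'\supsetneq \bar D$ so that $\partial D$ lies in the interior of the domain of holomorphy, and (ii) show that the proved integer angles $\psi_k$ open up into a genuine arc $\Gamma\subset\partial D$. Both are avoidable. The paper instead uses a \emph{single} proved heteroclinic point $q_0\in\mathbb D$ together with the key structural fact that the validated trapping region of Theorem \ref{thm:HomoclinicBlowup} has nonempty interior: since the orbit through $q_0$ enters that region after finite time, continuous dependence on initial data yields an open neighborhood $U\subseteq\mathbb D$ of heteroclinic parameter values. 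Intersecting $U$ with the open disk (or, even more simply, replacing $q_0$ by a backward iterate $e^{\tilde\lambda t}q_0$, $t<0$, which lies strictly inside $\mathbb D$ since $Re\,\tilde\lambda>0$) gives an interior open set on which the limit function $g$ vanishes. The identity theorem then applies on the open unit disk without any extension of $P_\theta$, and no arc structure of the good angles is needed --- a single proved heteroclinic orbit suffices. Your worry about a ``Cantor-type set'' of good angles is therefore moot; in fact any open subset of the circle is a disjoint union of arcs, so that concern wouldn't have been fatal either, but more importantly the paper never needs it. If you want to keep your boundary-arc framing you must at minimum justify the extension to $D'$, and it is cleaner to drop it.

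Your explanation of why $\theta=\pm\pi/2$ is excluded is also off. It is not that the failing interval $214\le k\le 321$ is ``too wide'' (one good angle would do, as above), nor is it about ruling out bounded recurrent $\omega$-limit sets. The obstruction, stated in the paper, is that for $\theta=\pm\pi/2$ equation \eqref{eq:CGL} is NLS and $\Phi$ is only a $C_0$-semigroup, not an analytic one; consequently the functions $g_n=\pi\circ\Phi(n,\cdot)\circ P_\theta$ need not be holomorphic in $z$, and the entire normal-family/identity-theorem machine breaks down. For $|\theta|<\pi/2$ analyticity of the semigroup (plus holomorphy of the quadratic nonlinearity) is precisely what makes each $g_n$ holomorphic, which is the hypothesis you need before Montel or Vitali can be invoked.
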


This proof closely mimics a result obtained in \cite{stuke2017blow} and relies on the fact that the nonlinearity in  \eqref{eq:CGL} is holomorphic and \eqref{eq:CGL} generates an analytic semigroup when $ \theta  \in ( -\pi/2,\pi/2)$. 
We conjecture that the nonlinear Schr\"odinger equation ($\theta=\pi/2$) also has solutions that are unbounded.
Note also that this theorem does not distinguish between finite time blowup and infinite time growup.

The rest of the paper is organized as follows: In the following three sections, we briefly show three separate fundamental techniques of computer-assisted proofs. These have been introduced in previous studies \cite{takayasu2019rigorous,Jaquette2020}, and some minor modifications are made for the current problem. In Section~\ref{sec:UnstableManifold} we introduce the parameterization method for 1-complex dimensional unstable manifolds of equilibrium to \eqref{eq:CGL}. Section~\ref{sec:RigorousIntegrator} is devoted to introducing a method of rigorous integration of PDEs in forward time. Section~\ref{sec:TrappingRegion} is an introduction of constructing a trapping region near the zero equilibrium. Finally, we conclude in Section~\ref{sec:ResultsAndDiscussion}, where we present the proof of our main theorems (Theorems~\ref{prop:ManyHeteroclinics} and~\ref{prop:GrowUp}), and conclude with discussions and outlook of the present study.

%%%%%%%%%%%%%%%%%%%%%%%%%%%%%%%%%%%%%%%%
%%%%%%%%%% UNSTABLE MANIFOLD SECTION %%%%%%%%%%%%
%%%%%%%%%%%%%%%%%%%%%%%%%%%%%%%%%%%%%%%%
%\input{unstable_manifold}

\newcommand{\tu}{\tilde{u}}

\section{Parameterization of the Local Unstable Manifold} \label{sec:UnstableManifold}

As mentioned already in the Introduction, the proof of Theorem~\ref{prop:ManyHeteroclinics} relies on a rigorous parameterization of the unstable manifold of a steady state $u_1$ (see Figure \ref{fig:equilibria}) of the nonlinear equation \eqref{eq:CGL}. Note that $u_1$ satisfies $e^{i \theta } ( \Delta u_1 + u_1^2) = 0$. %Assume that the eigenvalue $\tilde \lambda$ has the largest (positive) real part, among all possible (finitely many) unstable eigenvalues. 
%{\color{red} I am getting confused with the number of unstable eigenvalues. On top of page 4, it is written that 
%$\lambda_1 = 17.696 + 35.391 i$ and $\lambda_2 = 17.696 - 35.391 i$ are unstable eigenvalues. What does {\em strong} unstable manifold then mean? JP }
%
%{\color{blue} In the implementation we use $\lambda_2 = 17.696 - 35.391 i$. I think the \emph{strong} means it endures with respect to the rotation of $\expig$ ($\lambda_1$ can be changed as stable eigenvalues for $\theta\in (\theta^*,\pi/2]$).  Akitoshi}
Solutions of \eqref{eq:CGL} with periodic boundary conditions may be expanded in Fourier series as
\begin{equation} \label{eq:Fourier_expansion}
	u(t,x) \bydef \sum_{k \in \Z} a_k(t) e^{2 \pi i k x},
\end{equation}
and matching like terms, this leads to the infinite system of nonlinear ODEs
\begin{equation} \label{eq:CGL_ODEs}
	\dot a_k(t) = g_k(a) \bydef \expig \left( -4 k^2 \pi^2 a_k+\left( a^2 \right)_k \right),
\end{equation}
where $a^2 = a*a$, which denotes the discrete convolution. As in \cite{Jaquette2020}, we assume that the Fourier coefficients in \eqref{eq:Fourier_expansion} satisfy the symmetry
\begin{equation} \label{eq:Fourier_symmetry}
	a_{-k}(t) = a_{k}(t), \quad \text{for all } k \in \Z.
\end{equation}
From the symmetry assumption, one only needs to solve for the non-negative Fourier coefficients. 
Let $g(a) = (g_k(a))_{k \ge 0}$ so that \eqref{eq:CGL_ODEs} can be more densely written as $\dot a = g(a)$. Denote by $\ta = (\ta_k)_{k \ge 0}$ the Fourier coefficients of the steady state $u_1(x)$ (already proven in \cite{Jaquette2020}), that is 
\[
u_1(x) = \ta_0 + 2 \sum_{k \ge 1} \ta_k e^{2 \pi i k x} = \sum_{k \in \Z} \ta_k e^{2 \pi i k x}.
\]
By construction, since $u_1$ is an equilibrium, its Fourier coefficients $\ta = (\ta_k)_{k \ge 0}$ satisfy $g(\ta)=0$.

Denote the unstable manifold of the steady states $\ta$ by
\[
W^u(\ta) \bydef \left\{ a^0 ~ \big| ~ \exists \text{ a solution } a(t) \text{ of  \eqref{eq:CGL_ODEs} with } a(0) = a^0 
\text{ and } \lim_{t \to -\infty} a(t) = \ta \right\}.
\]
Consider $(\tilde \lambda,\tilde h)$ the solution of the linearized problem \eqref{eq:linearized_eigprob} (as described in Remark~\ref{rem:fixing_the_eigenpair}) such that the real part of the eigenvalue $\tilde{\lambda}$ is positive. Under these assumptions, we introduce the theory to parameterize 1-complex dimensional unstable manifolds of steady states of the nonlinear equation \eqref{eq:CGL}. The approach is based on the Parameterization Method (e.g. see \cite{MR1976079,MR1976080,MR2177465}) for unstable manifolds, and follows closely the presentation of \cite{MR3906120}.

We are interested in solutions of \eqref{eq:CGL_ODEs} whose spectral representation have geometrically decaying Fourier coefficients. Given an geometric decay rate $\nu \ge 1$, this feature is encapsulated in the weighted $\ell^1$ Banach algebra
\begin{equation} \label{eq:ell_nu_one}
	\ell_\nu^1 \bydef \left\{ a = (a_k)_{k \ge 0} : |a|_{\nu} \bydef |a_0| + 2 \sum_{k=1}^\infty |a_k| \nu^k  < \infty \right\}.
\end{equation}
Indeed, given two sequences $a=(a_k)_{k \ge 0},b=(b_k)_{k \ge 0} \in \ell_\nu^1$, with their discrete convolution $a*b=((a*b)_k )_{k \ge 0}$ given component-wise by
\begin{equation} \label{eq:discrete_convolution}
	(a*b)_k = \sum_{k_1+k_2 = k \atop k_1,k_2 \in \Z} a_{|k_1|} a_{|k_2|},
\end{equation}
one has that $|a*b|_\nu \le |a|_\nu |b|_\nu$.

Denote by $\tb = (\tb_k)_{k \ge 0}$ the Fourier coefficients of the eigenvector $\tilde h(x)$ (which solves \eqref{eq:linearized_eigprob}), that is 
\[
\tilde h(x) = \tb_0 + 2 \sum_{k \ge 1} \tb_k e^{2 \pi i k x} = \sum_{k \in \Z} \tb_k e^{2 \pi i k x}.
\]
The Fourier eigenpair $(\tilde \lambda,\tb)$ satisfies $Dg(\ta) \tb = \lambda \tb$, which becomes, in the space of Fourier coefficients
\[
%\expig \left(-4 k^2\pi^2 \ta_k+\left( \ta^2 \right)_k \right) & = 0
\expig \left( -4 k^2\pi^2 \tb_k + 2 \left(\ta*\tb \right)_k\right) -  \tilde \lambda \tb_k = 0, \qquad \tb_{-k} = \tb_{k}, \quad \text{for all } k \ge 0.
\]
Note that for $\ta$ and $\tb$, we assume that the symmetry condition \eqref{eq:Fourier_symmetry} holds.
The (computer-assisted) approach to obtain the steady state $\ta$ and the eigenpair $(\tilde \lambda,\tb)$ is already obtained and presented in \cite{MR3906120}. 

Denote by $\mathbb{D} = \{ z \in \C : |z| \le 1\}$ the closed unit disk centered at $0$ in the complex plane. 

Given an angle $\theta \in [-\pi/2,\pi/2]$, we look for a function $P_\theta:\mathbb{D}  \to \ell_\nu^1$ satisfying the {\em invariance equation}
\begin{equation} \label{eq:invariance_equation}
	g(P_\theta(\sigma)) = \tilde \lambda \sigma DP_\theta(\sigma),
\end{equation}
for all $\sigma \in \mathbb{D} $ subject to the first order constraints
\begin{equation} \label{eq:first_order_constraints}
	P_\theta(0)=\ta \quad \text{and} \quad DP_\theta(0) = \tb.
\end{equation}

If $P_\theta:\mathbb{D}  \to \ell_\nu^1$ solves \eqref{eq:invariance_equation} subject to the first order constraints \eqref{eq:first_order_constraints}, then one can easily verify that for every $\sigma \in \mathbb{D}$, the function
\begin{equation} \label{eq:unstable_solution}
	t \mapsto P_\theta(e^{\tilde \lambda t} \sigma)
\end{equation}
solves the differential equation on $\ell_\nu^1$ given by $\dot a = g(a)$ on $(-\infty,0)$. Moreover, since $a(0) = P_\theta(\sigma) \in image(P_\theta)$ and $\lim_{t \to -\infty} P_\theta(e^{\tilde \lambda t} \sigma) = P_\theta(0)=\ta$, then $P_\theta$ parameterizes a subset of the unstable manifold for $\ta$ given by
\begin{equation} \label{eq:local_unstable_set}
	W_{\rm loc}^{u,\tilde \lambda,\theta}(\ta,\tb) \bydef
	image(P_\theta)=P_\theta(\mathbb{D}) \subset W^u(\ta).
\end{equation}

We call the set $W_{\rm loc}^{u,\tilde \lambda,\theta}(\ta,\tb)$ in \eqref{eq:local_unstable_set} the {\em local unstable manifold at $\theta$ associated to $\tilde \lambda$ in the direction $\tb$}. Note that $W_{\rm loc}^{u,\tilde \lambda,\theta}(\ta,\tb)$ is a sub-manifold of the unstable manifold $W^u(\ta)$.

The strategy is now clear: compute a function $P_\theta:\mathbb{D}  \to \ell_\nu^1$ which solves \eqref{eq:invariance_equation} subject to the first order constraints given in \eqref{eq:first_order_constraints}. 
%For the sake of simplicity of the presentation, denote $\tilde \lambda= \lambda$, $a=\ta$ and $b=\tb$.
We represent $P_\theta$ as a power series
\begin{equation} \label{eq:P_power_series}
	P_\theta(\sigma) = \sum_{m \ge 0} p_m \sigma^m, \qquad p_m \in \ell_\nu^1.
\end{equation}
The coefficients of the power series \eqref{eq:P_power_series} are elements of the Banach space 
\begin{equation} \label{eq:X_nu}
	X^\nu \bydef  \left\{ p = \{p_m\}_{m \ge 0} : p_m \in \ell_\nu^1 \text{ and } 
	\| p\|_\nu \bydef \sum_{m \ge 0} |p_m|_\nu < \infty \right\}.
\end{equation}
%
%Note that any $p \in X^\nu$ may be written as $p = (p_{k,m})_{k,m \ge 0}$ and that the norm in $X^\nu$ may be written as 
%%
%\[
%\| p\|_\nu = \sum_{m \ge 0} |p_m|_\nu
%= \sum_{m \ge 0} \left( |p_{0,m}| + 2 \sum_{k=1}^\infty |p_{k,m}| \nu^k \right)
%= \sum_{k,m \ge 0} |p_{k,m}| \omega_k,
%\]
%%
%where we define the weights $\omega=(\omega_k)_{k\geq0}$ by $\omega_0=1$ and $\omega_k = 2 \nu^k$ for $k\geq 1$. 
%
Define the {\em Taylor-Fourier product} $*_{TF} :X^\nu \times X^\nu \to X^\nu$ as follows: given $p,q \in X^\nu$,
\begin{equation}
	(p *_{TF} q)_m \bydef \sum_{\ell =0}^m p_\ell * q_{m-\ell},
\end{equation} 
where $*:\ell_\nu^1 \times \ell_\nu^1 \to \ell_\nu^1$ is the standard discrete convolution \eqref{eq:discrete_convolution}. 
One can verify that $(X^\nu,*_{TF})$ is a Banach algebra, that is given $p,q \in X^\nu$,
\begin{equation} 
	\|p *_{TF} q \|_\nu \le \|p \|_\nu \|q \|_\nu.
\end{equation}

Plugging the power series \eqref{eq:P_power_series} in the invariance equation \eqref{eq:invariance_equation} and imposing the first order constraints \eqref{eq:first_order_constraints} leads to look for a solution $p \in X^\nu$ of $f(p)=0$, where the map $f = (f_{k,m})_{k,m \ge 0}$ is given by
\begin{equation} \label{eq:f=0_manifold}
	f_{k,m}(p) \bydef 
	\begin{cases}
		p_{k,0} - \ta_k, & m=0, \quad k \ge 0
		\\
		p_{k,1} - \tb_k, & m=1, \quad k \ge 0
		\\
		\left(\tilde \lambda m + 4 \expig k^2 \pi^2 \right) p_{k,m} - \expig (p*_{TF}p)_{k,m}, & m \ge 2, \quad k \ge 0.
	\end{cases}
\end{equation}
%
%Since $(X^\nu,*_{TF})$ is a Banach algebra (see Lemma~\ref{lem:TF_Banach_algebra}), 
%then $f : X^\nu \to \tilde X^{\nu}$, where
%%
%\[
%\tilde X^\nu \bydef  \left\{ p = \{p_m\}_{m \ge 0} : 
%\sum_{m \ge 0} \left( |p_{0,m}| + 2 |p_{1,m}| \nu
%+ 2 \sum_{k=2}^\infty |p_{k,m}| \frac{\nu^k}{|\tilde \lambda m + 4 \expig k^2 \pi^2|}  \right)<\infty \right\}.
%\]
%%

The following result shows that a zero of $f$ provides the Fourier-Taylor coefficients of the expansion of a parameterization of the local unstable manifold at $\theta$ associated to $\tilde \lambda$ in the direction $\tb$.

\begin{lemma} 
	Let $\nu > 1$ and suppose that $p = (p_{k,m})_{k,m \ge 0} \in X^\nu$ solves $f(p)=0$. For each $m\ge0$, denote $p_m = (p_{k,m})_{k \ge 0}$. Then, for each $\sigma \in \mathbb{D}$,
	\[
	P(\sigma) \bydef \sum_{m \ge 0} p_m \sigma^m \in W^u(\ta).
	\]
\end{lemma}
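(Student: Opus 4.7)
The plan is to verify four things in sequence: (i) the series defining $P(\sigma)$ converges in $\ell_\nu^1$ for $\sigma\in\mathbb{D}$ (and the termwise derivative converges for $|\sigma|<1$), (ii) $P$ satisfies the invariance equation \eqref{eq:invariance_equation} together with the first order constraints \eqref{eq:first_order_constraints}, (iii) the curve $a(t) \bydef P(e^{\tilde\lambda t}\sigma)$ solves $\dot a = g(a)$ for $t\le 0$, and (iv) $a(t)\to\ta$ as $t\to-\infty$, so that $P(\sigma)\in W^u(\ta)$ in the sense of the definition given in the paper.

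For step (i), the hypothesis $p\in X^\nu$ and $|\sigma|\le 1$ give $\sum_{m\ge 0}|p_m\sigma^m|_\nu \le \sum_{m\ge 0}|p_m|_\nu = \|p\|_\nu < \infty$, so $P(\sigma)\in\ell_\nu^1$; by the same estimate the map $\sigma\mapsto P(\sigma)$ is continuous on $\mathbb{D}$ and analytic on the open disk, so $DP(\sigma)=\sum_{m\ge 1}m p_m\sigma^{m-1}$ is well defined in $\ell_\nu^1$ for $|\sigma|<1$. For step (ii), the key observation is that the Taylor-Fourier product is designed precisely so that the Taylor coefficients of $P(\sigma)^2$ (as a function into the Banach algebra $\ell_\nu^1$) are $(p*_{TF}p)_m\in\ell_\nu^1$; termwise this reads $(P(\sigma)^2)_k = \sum_{m\ge 0}(p*_{TF}p)_{k,m}\,\sigma^m$. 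Substituting the expansions into $g_k(P(\sigma))-\tilde\lambda\sigma\,DP_k(\sigma)$ and collecting powers of $\sigma$, the coefficient of $\sigma^m$ is
\begin{equation}
e^{i\theta}\!\left(-4k^2\pi^2 p_{k,m} + (p*_{TF}p)_{k,m}\right) - \tilde\lambda m\, p_{k,m}.
\end{equation}
For $m=0$ this vanishes because $p_{\cdot,0}=\ta$ and $g(\ta)=0$; for $m=1$ it vanishes because $p_{\cdot,1}=\tb$ and $(\tilde\lambda,\tb)$ solves the Fourier form of \eqref{eq:linearized_eigprob}; for $m\ge 2$ it is exactly $-f_{k,m}(p)=0$. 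Uniqueness of power series expansions in $\ell_\nu^1$ then yields \eqref{eq:invariance_equation}, and $P(0)=p_0=\ta$, $DP(0)=p_1=\tb$ give the first order constraints.

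For step (iii), since $\operatorname{Re}(\tilde\lambda)>0$, the curve $\sigma(t)\bydef e^{\tilde\lambda t}\sigma$ satisfies $|\sigma(t)|<1$ for all $t<0$ (and $|\sigma(0)|\le 1$), so $DP(\sigma(t))$ is defined and termwise differentiation in $t$ is justified by the geometric majorant furnished by $\nu>1$. Chain rule then gives
\begin{equation}
\dot a(t) = \tilde\lambda\,\sigma(t)\,DP(\sigma(t)) = g(P(\sigma(t))) = g(a(t)),
\end{equation}
where the middle equality is the invariance equation evaluated at $\sigma(t)\in\mathbb{D}$. For step (iv), the estimate $|P(\sigma(t))-\ta|_\nu \le \sum_{m\ge 1}|p_m|_\nu\,|\sigma(t)|^m \le \|p\|_\nu\,|\sigma(t)|\to 0$ as $t\to-\infty$ shows $a(t)\to\ta$ in $\ell_\nu^1$. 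Setting $a^0=a(0)=P(\sigma)$ thus exhibits $P(\sigma)\in W^u(\ta)$.

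The only genuinely delicate point is the justification of termwise manipulations (matching power-series coefficients and differentiating in $t$) inside the Banach algebra $(\ell_\nu^1,*)$; the Banach-algebra inequality $|a*b|_\nu\le|a|_\nu|b|_\nu$ together with the product inequality on $(X^\nu,*_{TF})$ makes each of these steps a routine Weierstrass-M application, so I do not expect any real obstacle beyond bookkeeping.
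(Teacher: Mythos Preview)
Your proof is correct and follows exactly the approach the paper sketches in the paragraph preceding the lemma (the paper itself does not supply a formal proof). You have simply filled in the details: matching power-series coefficients to recover the invariance equation from $f(p)=0$, invoking the conjugacy $t\mapsto P(e^{\tilde\lambda t}\sigma)$ to obtain a solution of $\dot a=g(a)$, and using $\operatorname{Re}(\tilde\lambda)>0$ for the backward limit.
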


The previous result paves the way to come, that is to prove the existence of $p \in X^\nu$ such that $f(p)=0$. This is done using the following  Newton-Kantorovich type theorem, which we now introduce. Before that, denote by $B_r(y) \bydef \{ q \in X^\nu : \| q - y \|_{\nu} \le r\}$ the closed ball of radius $r>0$ centered at $y \in X^{\nu}$. 

%, and denote the Banach space
%%
%\[
%\tilde X^\nu \bydef  \left\{ p = \{p_m\}_{m \ge 0} : 
%\sum_{m \ge 0} \left( |p_{0,m}| + 2 |p_{1,m}| \nu
%+ 2 \sum_{k=2}^\infty |p_{k,m}| \frac{\nu^k}{|\tilde \lambda m + i k^2 \omega^2|}  \right)<\infty \right\},
%\]
%%
%for which we can verify that $f : X^\nu \to \tilde X^{\nu}$.

\begin{theorem}[{\bf A Newton-Kantorovich type theorem}] \label{thm:radii_polynomials}
	Let $A \in B(X^{\nu},X^\nu)$ be a bounded linear operator (typically an approximate inverse of $Df(\bp)$).
	Consider a point $\bp \in X^\nu$ (typically a numerical approximation), assume that $A$ is injective and that $A f \colon X^\nu \to X^\nu$.
	Let $Y_0$ be a nonnegative constant, and a function $Z:(0,\infty) \to (0,\infty)$ satisfying
	\begin{align}
		\label{eq:general_Y_0}
		\| A f(\bp) \|_\nu &\le Y_0
		\\
		%\label{eq:general_Z_0}
		%\| I - A A^{\dagger}\|_{B(X^\nu)} &\le Z_0
		%\\
		%\label{eq:general_Z_1}
		%\| A[A^{\dagger} - Df(\bp)] \|_{B(X^\nu)} &\le Z_1,
		%\\
		%\label{eq:general_Z_2}
		%\| A[Df(c) - Df(\bp)]\|_{B(X^\nu)} &\le Z_2(r) r, \quad \text{for all } q \in B_r(\bp),
		\label{eq:general_Z}
		\| I - A Df(q)\|_{B(X^\nu)} &\le Z(r), \quad \text{for all } q \in B_r(\bp),
	\end{align}
	where $\| \cdot \|_{B(X^\nu)}$ denotes the operator norm.  Define the radii polynomial by 
	\begin{equation} \label{eq:general_radii_polynomial}
		p(r) \bydef rZ(r) - r + Y_0.
	\end{equation}
	If there exists $r_0>0$ such that $p(r_0)<0$, then there exists a unique $\tp \in B_{r_0}(\bp)$ such that $f(\tp) = 0$.
\end{theorem}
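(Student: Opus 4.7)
The plan is to recast $f(p)=0$ as a fixed point problem for the Newton-like operator $T\colon X^\nu \to X^\nu$ defined by $T(q) \bydef q - A f(q)$, and apply the Banach fixed point theorem on the closed ball $B_{r_0}(\bp)$. Since $B_{r_0}(\bp)$ is a closed subset of the Banach space $X^\nu$ it is complete, so a unique fixed point of a strict contraction will exist there. Once a fixed point $\tp$ is produced, it satisfies $Af(\tp)=0$, and the injectivity of $A$ upgrades this to $f(\tp)=0$; the uniqueness of $\tp$ in $B_{r_0}(\bp)$ as a zero of $f$ follows from uniqueness of the $T$-fixed point.

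To verify the self-map property I would use the fundamental theorem of calculus for the Fréchet-differentiable map $Af$ to write, for any $q \in B_{r_0}(\bp)$,
\begin{align*}
T(q) - \bp
&= -Af(\bp) + (q-\bp) - \bigl(Af(q) - Af(\bp)\bigr) \\
&= -Af(\bp) + \int_0^1 \bigl[I - A\, Df(\bp + t(q-\bp))\bigr](q-\bp)\,dt.
\end{align*}
The segment from $\bp$ to $q$ lies in $B_{r_0}(\bp)$ by convexity, so the hypotheses \eqref{eq:general_Y_0} and \eqref{eq:general_Z} yield $\|T(q)-\bp\|_\nu \le Y_0 + Z(r_0)\,r_0 = r_0 + p(r_0) < r_0$. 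The same integral identity, applied to $q_1,q_2 \in B_{r_0}(\bp)$,
\begin{equation*}
T(q_1) - T(q_2) = \int_0^1 \bigl[I - A\, Df(q_2 + t(q_1-q_2))\bigr](q_1-q_2)\,dt,
\end{equation*}
together with the convexity of $B_{r_0}(\bp)$, produces $\|T(q_1)-T(q_2)\|_\nu \le Z(r_0)\|q_1-q_2\|_\nu$. Rearranging the hypothesis $p(r_0)<0$ as $Z(r_0) < 1 - Y_0/r_0 \le 1$ confirms that $T$ is a strict contraction, and Banach's fixed point theorem closes the argument.

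The proof itself is a standard contraction-mapping exercise, so there is no real hard part; the nontrivial ingredients are (i) the application of the fundamental theorem of calculus, which is legitimate because $f$ in \eqref{eq:f=0_manifold} is polynomial in the Banach algebra $(X^\nu, *_{TF})$ and hence $Af$ is of class $C^1$ on all of $X^\nu$, and (ii) the injectivity of $A$, which is precisely what allows us to pass from $Af(\tp)=0$ to $f(\tp)=0$ at the end. The full computational weight of the method is shifted into the a priori step of deriving explicit, machine-verifiable bounds $Y_0$ and $Z(r)$ for the concrete map $f$, which is where interval arithmetic enters in the subsequent sections.
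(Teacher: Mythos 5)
Your proposal is correct, and it is the standard contraction-mapping proof of this Newton--Kantorovich/radii-polynomial result. The paper states Theorem~\ref{thm:radii_polynomials} without proof and refers to earlier work for the construction of $Y_0$ and $Z(r)$; the underlying argument there is exactly the one you give, namely applying the Banach fixed point theorem to $T(q)=q-Af(q)$ on the closed ball $B_{r_0}(\bp)$, using the mean-value (FTC) identity and convexity of the ball to extract the self-map and contraction bounds from \eqref{eq:general_Y_0}--\eqref{eq:general_Z}, and then invoking injectivity of $A$ to pass from $Af(\tp)=0$ to $f(\tp)=0$.
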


Since the construction of the bounds $Y_0$ and $Z(r)$ satisfying \eqref{eq:general_Y_0} and \eqref{eq:general_Z}, respectively, is almost exactly the same as the ones presented in \cite{Jaquette2020}, we omit their construction here.

In practice, the rigorous computation of $W_{\rm loc}^{u,\tilde \lambda,\theta}(\ta,\tb)$, the local unstable manifold at $\theta$ associated to $\tilde \lambda$ in the direction $\tb$ is done as follows. Starting with the Fourier coefficients of the steady state $\ta$ and the eigenpair $(\tilde \lambda,\tb)$, we consider a finite dimensional reduction of $f = (f_{k,m})_{k,m \ge 0}$ defined in \eqref{eq:f=0_manifold} on which we apply Newton's method to obtain a numerical approximation $\bp = (\bp_{k,m})_{k=0,\dots,N \atop m = 0,\dots,M}$, yielding a Fourier-Taylor coefficients of the approximation

\begin{equation} \label{eq:P_power_series_approx}
	\bar P_\theta(\sigma) \bydef \sum_{m = 0}^M \bp_m \sigma^m, \qquad \bp_m = ((\bp_m)_k)_{k=0}^N \in \C^{N+1}.
\end{equation}

We wrote a MATLAB program (available at \cite{bib:codes}) which rigorously computes (i.e. controlling the floating point errors) the bounds $Y_0$ and $Z(r)$ as presented in Theorem~\ref{thm:radii_polynomials}.  The program uses the interval arithmetic library INTLAB available at \cite{Ru99a}.
Fixing $\nu =1$ and using the MATLAB program, the code verifies the existence of $r_0>0$ such that $p(r_0)<0$. By Theorem~\ref{thm:radii_polynomials}, there exists a unique $\tp \in B_{r_p}(\bp)$ such that $f(\tp) = 0$. By construction, the resulting function $\tilde P_\theta:\mathbb{D} \to \ell_\nu^1$ given by 
\[
\tilde P_\theta(\sigma) \bydef \sum_{m \ge 0} \tp_m \sigma^m
\]
parameterizes the local unstable manifold at $\theta$ associated to $\tilde \lambda$ in the direction $\tb$. Moreover, for each $\sigma \in \mathbb{D}$, $\tilde P_\theta(\sigma) \in W^u(\ta)$. We get the rigorous $C^0$ error bound
\[
\sup_{\sigma \in \mathbb{D}} | \tilde P_\theta(\sigma) - \bar P_\theta(\sigma)  |_\nu 
= \sup_{\sigma \in \mathbb{D}} \left| \sum_{m \ge 0} \tp_m \sigma^m - \sum_{m \ge 0} \bp_m \sigma^m  \right|_\nu \le \sup_{\sigma \in \mathbb{D}} \sum_{m \ge 0} |\tp_m - \bp_m|_\nu |\sigma|^m \le \| \tp - \bp \|_\nu \le r_0.
\]
We applied this approach to obtain the parameterizations of $W_{\rm loc}^{u,\tilde \lambda,\theta}(\ta,\tb)$ for the values  $\theta \in \{0,\pi/4,\pi/2\}$. As pointed out already in \cite{Jaquette2020} (see Remark 3.3), the continuity of the bounds $Y_0$ and $Z(r)$ in the parameter $\nu \ge 1$ implies that if the computer-assisted proof is performed for $\nu=1$, then, there exists $\tilde \nu>1$ such that $p(r_0)<0$, and hence, the unique solution of $f=0$ satisfies $\tp \in X^{\tilde \nu}$. This in turn implies that for each $\sigma \in \mathbb{D}$, $\tilde P_\theta(\sigma) \in \ell_{\tilde \nu}^1$ and therefore the corresponding points on $W_{\rm loc}^{u,\tilde \lambda,\theta}(\ta,\tb)$ are analytic in space.

\section{Rigorous Integrator} \label{sec:RigorousIntegrator}

In this section we briefly introduce a method of rigorous numerical integration for \eqref{eq:CGL} in forward time. This is firstly proposed in \cite{takayasu2019rigorous} and then slightly improved for integrating the solution of the nonlinear Schr\"odinger (NLS) case ($\theta=\pi/2$) in \cite{Jaquette2020}. For proving the existence of heteroclinic orbits for \eqref{eq:CGL}, this integrator will propagate the rigorous inclusion of 1-complex dimensional unstable manifold $W_{\rm loc}^{u,\tilde \lambda,\theta}(\ta,\tb)$, which is given by the parameterization method described in Section \ref{sec:UnstableManifold}.

%Let $\ell_{\nu,1}^1$ be a bi-infinite sequence space with a weight $\nu\ge 1$ defined by
%\[
%\ell_{\nu,1}^1\bydef\left\{a=(a_k)_{k\in\Z}:a_k\in\C,~\|a\| \bydef\sum_{k \in \Z}|a_k|\nu^{|k|}<\infty\right\}.
%\]
Let us assume that the initial data is presented by the Fourier series
\[
u(x,0)=\sum_{k \in \Z}\varphi_k e^{ik\omega x},\quad \varphi=(\varphi)_{k\in\mathbb{Z}}\in \ell^1_\nu,
\]
where $\ell^1_\nu$ is the same sequence space as in the previous section, but the definition of its norm is slightly changed as $|a|_{\nu} \bydef\sum_{k \in \Z}|a_k|\nu^{|k|}$ because there is no need to assume the symmetry \eqref{eq:Fourier_symmetry} in the followings.
%with symmetry assumption such that $\varphi_{-k}=\varphi_k$ for all $k\in \mathbb{Z}$?
%We consider the Cauchy problem of \eqref{eq:CGL} with the initial condition above.
Under the Fourier ansatz of $u$ defined in \eqref{eq:Fourier_expansion}, we consider the Cauchy problem of \eqref{eq:CGL_ODEs} with the initial data $a(0)=\varphi$.
%\begin{equation}\label{eq:NLS_ode}
%\dot{a}_{k}(t)=i\left(-k^{2} \omega^{2} a_{k}(t)+\left(a^{2}(t)\right)_{k}\right),\quad a(0)=\varphi,
%\end{equation}
%where $a^2(t)=a(t)*a(t)$.
For a fixed time $h>0$, let $J\bydef[0,h]$ be a time step.
The Banach space in which we validate rigorously the local existence of the solution is defined by
\[
X\bydef C(J;\ell^1_\nu),\quad \|a\|_X\bydef\sup_{t\in J}|a(t)|_\nu,\quad|a(t)|_\nu=\sum_{k \in \Z}|a_k(t)|\nu^{|k|}~\mbox{for a fixed}~t\in J.
\]
Let
%\[
$\bar{a}(t) \bydef\left(\ldots, 0,0, \bar{a}_{-K}(t), \ldots, \bar{a}_{K}(t), 0,0, \ldots\right)$
%\]
be a truncated approximation of $a(t)$ with the Fourier projection $K>0$.
We rigorously include the solution $a(t)$ of \eqref{eq:CGL_ODEs} in a neighborhood of $\bar{a}(t)$ defined by
\[
B_{J}(\bar{a}, \varrho) \bydef\left\{a \in X:\|a-\bar{a}\|_{X} \leq \varrho,~a(0)=\varphi\right\}.
\]

To represent the ODEs \eqref{eq:CGL_ODEs} more densely, we define the \emph{Laplacian} operator $L$ acting on $b=(b_k)_{k\in\mathbb{Z}}$ as
\[
Lb\bydef\left(-4 \pi^2 k^2b_k\right)_{k\in\mathbb{Z}},
\]
and $D(L)\subset\ell^1_\nu$ denotes the domain of the operator $L$.
For $a\in C^1(J;D(L))$ let us define
\begin{align}\label{eq:def_F}
	(F(a))(t)\bydef \dot{a}(t) - g(a(t)) = \dot{a}(t)-\expig\left(La(t)+a^2\left(t\right)\right).
\end{align}
Then we consider the ODEs \eqref{eq:CGL_ODEs} as the zero-finding problem $F(a)=0$.
That is, if $F(a)=0$ with the initial condition $a(0)=\varphi$ holds, such $a(t)$ solves the Cauchy problem of \eqref{eq:CGL_ODEs}.
We also define an operator $T:X\to X$ as
\begin{align}\label{eq:simp_Newton_op}
	(T(a))(t)\bydef U(t,0)\varphi+\expig\int_0^tU(t,s)\left(a^2(s)-2\bar{a}(s)*a(s)\right)ds,
\end{align}
%where $U(t,s)$ is an evolution operator generated on $\ell^1$.
where $\left\{U(t,s)\right\}_{0\le s\le t\le h}$ is the evolution operator (cf., e.g.,\cite{pazy1983semigroups}) on $\ell^1_\nu$.
Such evolution operator is defined by a solution map of the linearized problem of \eqref{eq:CGL_ODEs} at $\ba$,
\begin{equation}\label{eq:linearized_problem}
	\dot{b}_k(t)+ 4  \pi^2 \expig k^2 b_k(t)-2\expig \left(\bar{a}\left(t\right)*b(t)\right)_k=0\quad(k\in\mathbb{Z})
\end{equation}
with any initial data $b(s)=\psi\in\ell^1_\nu$ ($0\le s\le t$). In other words, the evolution operator provides the solution of \eqref{eq:linearized_problem} via the relation $b(t)=U(t,s)\psi$.
In \cite[Theorem 3.2]{takayasu2019rigorous}, we give a hypothesis to validate existence of the evolution operator.
Checking the hypothesis by using rigorous numerics, we also provide a uniform bound of the evolution operator over the simplex $\cS_h\bydef \{(t,s):0\le s\le t\le h\}$, that is a computable constant $\bm{W_h}>0$ satisfying
\begin{equation}\label{eq:W_h_constant}
	\|b\|_{X}=\sup_{(t,s)\in\cS_h}|U(t,s)\psi|_\nu\le \bm{W_h}|\psi|_\nu,\quad\forall\psi\in\ell^1_\nu.
\end{equation}
Such a constant is obtained by solving the linearized problem \eqref{eq:linearized_problem} via decomposing the solution $b$ by the finite mode $b^{(K')}=\left(b_{k}\right)_{|k|\le K'}$ and the tail  $b^{(\infty)}=\left(b_{k}\right)_{|k|>K'}$ for $K'\in\mathbb{N}$ satisfying $K>K'$.

The rigorous integrator we used for proving heteroclinic orbits is based on the following theorem, which provides the local existence (in time) of the solution via interval techniques.
\begin{theorem}[\!\!{\cite[Theorem 4.1]{takayasu2019rigorous}}]\label{thm:local_inclusion}
	%	Consider the Cauchy problem \eqref{eq:CGL_ode}.
	%	For a given initial sequence $\varphi$ and its approximation $\ba(0)$, assume that there exists $\varepsilon\ge 0$ such that $\|\varphi-\ba(0)\|\le\varepsilon$.
	Given the approximate solution $\ba\in C(J;D(L))\cap C^1(J;\ell^1_\nu)$ of \eqref{eq:CGL_ODEs} and the initial sequence $\varphi$, assume that $|\varphi-\ba(0)|_\nu\le\varepsilon$ holds for $\varepsilon\ge 0$.
	Assume also that, for any $a\in B_J\left(\ba,\varrho\right)$,
	%		\begin{align*}
	%	\displaystyle\sup_{t\in J}\left\| T(a)(t)-\ba(t)\right\|\le f_{\varepsilon}\left(\varrho\right),
	$\left\| T(a)-\ba\right\|_X\le f_{\varepsilon}\left(\varrho\right)$ holds,
%		\end{align*}
	where $f_{\varepsilon}(\varrho)$ is given by
		\begin{align}\label{eq:f}
	f_{\varepsilon}\left(\varrho\right)\bydef \bm{W_h}\left[\varepsilon+h\left(2\varrho^2+\delta\right)\right].
		\end{align}
	Here, $\bm{W_h}>0$ and $\delta> 0$ satisfy
	$\sup_{(t,s)\in\mathcal{S}_h}\left\|U(t,s)\right\|_{B (\ell^1_\nu)}\le \bm{W_h}$ and
	$\left\|F(\ba)\right\|_{X}\le\delta$, respectively.
	If there exists $\varrho_{0}>0$ such that
		\[
	f_{\varepsilon}\left(\varrho_0\right)\le\varrho_0,
		\]
	then the exact Fourier coefficients $\ta$ of the solution of \eqref{eq:CGL_ODEs} are rigorously included in $B_J\left(\ba,\varrho_0\right)$ and are unique in $B_J\left(\ba,\varrho_0\right)$.
\end{theorem}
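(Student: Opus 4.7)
The plan is to cast Theorem~\ref{thm:local_inclusion} as a Banach fixed-point argument for the operator $T$ on the closed, nonempty set $B_J(\ba,\varrho_0)\subset X$. The first step is to observe that zeros of $F$ with initial datum $\varphi$ correspond exactly to fixed points of $T$. Rewriting $F(a)=0$ as the linearization about $\ba$ driven by a quadratic remainder,
\[
\dot a - \expig\bigl(La + 2\,\ba*a\bigr) = \expig\bigl(a^2 - 2\,\ba*a\bigr),
\]
and applying Duhamel's principle with the evolution operator $U(t,s)$ yields $a=T(a)$; conversely, differentiating \eqref{eq:simp_Newton_op} shows any fixed point solves \eqref{eq:CGL_ODEs}, and from $U(0,0)=I$ one reads off $T(a)(0)=\varphi$ automatically.

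With this reduction in place, I would verify the two hypotheses of the contraction mapping principle on $B_J(\ba,\varrho_0)$. The self-map property is essentially hypothesized: for any $a\in B_J(\ba,\varrho_0)$ the assumption gives $\|T(a)-\ba\|_X \le f_\varepsilon(\varrho_0)\le \varrho_0$. For the Lipschitz estimate I would carry out the routine subtraction
\[
T(a_1)(t) - T(a_2)(t) = \expig\int_0^t U(t,s)\bigl[(a_1+a_2-2\,\ba)*(a_1-a_2)\bigr](s)\,ds,
\]
and then invoke the Banach algebra property of $\ell^1_\nu$, the uniform bound \eqref{eq:W_h_constant}, and $|a_i-\ba|_\nu\le \varrho_0$ to extract the Lipschitz constant $2\bm{W_h}h\varrho_0$.

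The crux of the argument is to deduce contractivity of $T$ from the self-map hypothesis alone. Rearranging $f_\varepsilon(\varrho_0)\le \varrho_0$ gives $2\bm{W_h}h\varrho_0^2 \le \varrho_0 - \bm{W_h}(\varepsilon + h\delta)$, so that $2\bm{W_h}h\varrho_0 \le 1 - \bm{W_h}(\varepsilon+h\delta)/\varrho_0$; with $\delta>0$ this forces the Lipschitz constant to be strictly less than $1$. Banach's fixed-point theorem then produces a unique $\ta\in B_J(\ba,\varrho_0)$ with $T(\ta)=\ta$, which by the first step is the unique solution of the Cauchy problem in that ball. The main obstacle I anticipate is precisely this coupling of the two inequalities: the constants align only because the coefficient $2$ in front of $\varrho^2$ in $f_\varepsilon$ is exactly what reappears in the Lipschitz constant after dividing by $\varrho_0$, and because $\delta>0$ supplies the strict slack. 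A pedagogically cleaner formulation might state a separate Lipschitz bound as its own hypothesis rather than extract it from the self-map estimate, but the compact form given in the theorem is well suited to the interval-arithmetic validation described in Section~\ref{sec:RigorousIntegrator}.
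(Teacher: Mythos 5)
Your proposal is correct and uses precisely the approach the paper indicates (the paper states the proof "is based on Banach's fixed-point theorem for the operator $T$" and defers details to \cite[Section 4]{takayasu2019rigorous}). The reduction of $F(a)=0$ with $a(0)=\varphi$ to $a=T(a)$ via Duhamel, the self-map estimate $\|T(a)-\ba\|_X\le f_\varepsilon(\varrho_0)\le\varrho_0$ (together with $T(a)(0)=\varphi$ from $U(0,0)=I$), and the Lipschitz bound
\[
\|T(a_1)-T(a_2)\|_X\le \bm{W_h}\,h\,\bigl|\,(a_1-\ba)+(a_2-\ba)\,\bigr|\text{-type estimate} \le 2\bm{W_h}h\varrho_0\,\|a_1-a_2\|_X
\]
are all exactly right, and your observation that $f_\varepsilon(\varrho_0)\le\varrho_0$ with $\delta>0$ forces $2\bm{W_h}h\varrho_0<1$ is the correct way to extract strict contractivity from the single stated inequality. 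The only small omissions are the routine remarks that $B_J(\ba,\varrho_0)$ is a nonempty complete metric space (nonemptiness using $\varepsilon\le\bm{W_h}\varepsilon\le f_\varepsilon(\varrho_0)\le\varrho_0$, where $\bm{W_h}\ge 1$ since $U(t,t)=I$) and that the mild fixed point regains enough regularity to be a classical solution of \eqref{eq:CGL_ODEs}; neither affects the substance of the argument.
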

The proof of Theorem \ref{thm:local_inclusion} is based on Banach's fixed-point theorem for the operator $T$ defined in \eqref{eq:simp_Newton_op}. For more details, we refer to our previous papers \cite[Section 4]{takayasu2019rigorous} for the case of $\theta=0, \pi/4$ and \cite[Section 4]{Jaquette2020} for $\theta =\pi/2$.

We applied this theorem iteratively to extend the local inclusion of the solution over longer time intervals. 
Let $0=t_0<t_1<\dots$ be grid points in time.
We call $J_i \bydef [t_{i-1}, t_i]$ the $i^{th}$ time step. Let us also define $t_i \bydef ih_i$ ($i = 1, 2,\dots$) with the stepsize $h_i$ of $J_i$, which can be changed adaptively.
Firstly, we assume that the solution $a(t)$ of \eqref{eq:CGL_ODEs} is rigorously included in $B_{J_1}\left(\ba^{J_1}, \varrho_{1}\right)$, where $\ba^{J_1}$ denotes an approximate solution given in $J_1$.
Secondly, we set the next time step $J_2$ in order to obtain the uniform bound of evolution operator $\bm{W_{h_2}}>0$.
Thirdly, the initial data is updated by the endpoint of solution on $J_1$, that is $\varphi = a(t_1)$. 
Replacing $J=J_2$, we apply Theorem \ref{thm:local_inclusion} for the Cauchy problem on $J_2$. 
We validate the sufficient condition of Theorem~\ref{thm:local_inclusion} and then obtain the next inclusion $B_{J_2}\left(\ba^{J_2}, \varrho_{2}\right)$. Finally, go back to the second step and repeat these processes several times. Thus, we can extend the local inclusion of solution until the sufficient condition of Theorem \ref{thm:local_inclusion} is no longer satisfied.
 This \emph{time stepping} scheme works successfully for the complex-valued nonlinear heat equations with the cases $\theta \in\{0, \pi / 4, \pi / 2\}$.

To complete the proof of existence of heteroclinic orbits, we need to prove the global existence of the solution after the numerical verification of the local inclusion using the time stepping. We construct a trapping region near the zero equilibrium, which guarantees the global existence of solution converging to zero.
The next section is devoted to introducing how we construct such a trapping region rigorously.

\section{Trapping Region} \label{sec:TrappingRegion}

If the zero equilibrium to \eqref{eq:CGL} was linearly stable then it would be relatively  straightforward to construct a trapping region, that is an open set within which every initial condition will converge to the zero equilibrium in forward time. 
However this is not the case. 
Indeed, from \eqref{eq:CGL_ODEs} one sees that the linearized flow has eigenvalues $ - 4 \pi^2 e^{i \theta} k^2$ for all $ k \in \Z$.  
For all $ \theta$ there exists a single zero eigenvalue, which presents a significant obstacle in identifying a trapping region.

The eigenspace associated with the zero eigenvalue corresponds to spatially constant solutions, and naturally forms an invariant subspace of the nonlinear dynamics. 
The dynamics on this subspace are governed by the  ordinary differential equation $\dot{z} = e^{i \theta} z ^2$, whose solutions with initial condition $\zeta(0) = z_0$ may be explicitly given by $ \zeta: \R_+ \to \C$    below 
\begin{align} \label{eq:PpowerSolution}
	\zeta(t) &= \frac{  z_0 }{1 -   z_0 t e^{i \theta} }.
\end{align}
Geometrically, these solutions foliate the origin by homoclinic orbits, with the exception of initial conditions  $ \pm r e^{i \theta}$ which blowup in finite positive/negative time.

While the center manifold associated with the zero eigenvalue contains solutions which blowup in finite time,  an open set of initial conditions solutions will converge to zero with monotonically decreasing norm. 
To that end we make the following definition.  
 \begin{definition}
 	For any $ \rho_0 > 0$ define the region 
 	\begin{align}\label{eq:InvariantBall}
 		B(\rho_0) \bydef 
 		\{
 		z \in \C :  
 		Re(e^{i \theta } z) \leq 0;
 		| z| \leq \rho_0
 		\} .
 	\end{align}
 \end{definition}

To then develop a trapping region about the zero equilibrium, we will restrict to initial data whose zeroth Fourier coefficients are contained in some region $B(\rho_0)$. 
When  $ \theta \in (-\pi/2,\pi/2)$ then all of the other eigenvalues in the linearization have negative real part. 
In \cite{takayasu2019rigorous}, we characterized an explicit trapping region about the zero equilibrium using a Lyapunov-Perron argument to demonstrate a stable foliation of  the region $B(\rho_0)$. However this analysis depended on the eigenvalues having a negative real part, and could not be extended to the case $ \theta = \pm \pi/2$. 

Treating the NLS case $ \theta = \pm \pi/2$ (where all eigenvalues lie on the imaginary axis) requires a different approach. 
In \cite{Jaquette2020} a theorem was proved which identifies a trapping region about the zero equilibrium, by blowing up the dynamics about the spatially homogeneous solutions.  
This approach does not take advantage of any exponential decay in the higher Fourier modes, as it was constructed to treat the NLS case where no such decay is present.  
However in the case where  $ \theta \in (-\pi/2,\pi/2)$ the trapping region argument in \cite{Jaquette2020} can be readily adapted, as the exponential decay in the higher Fourier modes only assists in the analysis. 

Let $ \iota^0: \C \hookrightarrow  \ell_\nu^1$ denote the inclusion into the 0\textsuperscript{th} Fourier mode.
Below we present the trapping region theorem from \cite{Jaquette2020} adapted for the case when one is integrating in the complex plane of time. The proof is trivially different, and is left to the reader.

\begin{theorem}[cf \cite{Jaquette2020} Theorem 2.3]\label{thm:HomoclinicBlowup} 
	Consider \eqref{eq:CGL_ODEs} with  $ \theta \in [-\pi/2,\pi/2]$. Fix $ 0 <  \rho_0, \rho_1$ and define the set 
	\begin{align} \label{eq:BallOfFunction}
		\cB(\rho_0,\rho_1) &\bydef 
		\left\{
		\phi + 	\iota^0(z_0)  \in \ell_{\nu}^1 \; 
		:\;
		z_0 \in B(\rho_0)  \subseteq \C;~
		| \phi |_\nu  \leq \rho_1 |z_0|^2
		\right\}.
	\end{align}
	If there exists some $ r >0$ such that 
	\begin{align} \label{eq:RadiiExponential}
		\rho_1  \exp \left\{  \tfrac{\pi}{2} 		r \rho_0 
		 \right\} < r,
	\end{align}
	then solutions of points  $ a(0) = 	  \phi + 	\iota^0(z_0)\in \cB(\rho_0,\rho_1) $ under \eqref{eq:CGL_ODEs} will exist for all positive time,   converge to zero, and satisfy $ | a(t) - \iota^0(\zeta(t)) |_\nu \leq r | \zeta(t)|^2$ for $\zeta(t)$ in \eqref{eq:PpowerSolution}.
\end{theorem}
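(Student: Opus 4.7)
The plan is to move into blow-up coordinates that factor out the dynamics of the spatially homogeneous center direction and then run a Gronwall bootstrap. Given initial data $a(0) = \phi + \iota^0(z_0) \in \cB(\rho_0, \rho_1)$, I would first introduce the perturbation $\eta(t) := a(t) - \iota^0(\zeta(t))$, where $\zeta$ is the explicit solution of $\dot\zeta = \expig \zeta^2$ with $\zeta(0) = z_0$. Using the convolution identities $\iota^0(\zeta) * \iota^0(\zeta) = \zeta^2 \iota^0(1)$ and $(\iota^0(\zeta) * \eta)_k = \zeta \eta_k$, direct substitution into \eqref{eq:CGL_ODEs} gives
\[
\dot\eta_k = \expig \left(-4\pi^2 k^2 \eta_k + 2\zeta(t) \eta_k + (\eta * \eta)_k\right).
\]
The linear-in-$\eta$ term $2\zeta \eta_k$ is resonant and would wreck a naive Gronwall argument, so the key step is to rescale by $\psi(t) := \eta(t)/\zeta(t)^2$. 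This is legitimate because $z_0 \in B(\rho_0)$ forces $\mathrm{Re}(z_0 \expig) \leq 0$, so $1 - z_0 \expig t \neq 0$ for all $t \geq 0$. The rescaling cancels the resonant term and produces the cleaner equation
\[
\dot\psi_k = \expig \left(-4\pi^2 k^2 \psi_k + \zeta(t)^2 (\psi * \psi)_k\right), \qquad |\psi(0)|_\nu = |\phi|_\nu/|z_0|^2 \leq \rho_1.
\]

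From here I would use the mild formulation via Duhamel's formula. For every $\theta \in [-\pi/2, \pi/2]$, the semigroup $e^{\expig L t}$ generated by $\expig L$, $(Lv)_k = -4\pi^2 k^2 v_k$, is diagonal in Fourier modes with $|e^{-4\pi^2 k^2 \expig t}| = e^{-4\pi^2 k^2 \cos\theta \cdot t} \leq 1$, hence a contraction on $\ell^1_\nu$. Combined with the Banach-algebra bound $|\psi * \psi|_\nu \leq |\psi|_\nu^2$, the mild equation for $\psi$ yields
\[
|\psi(t)|_\nu \leq |\psi(0)|_\nu + \int_0^t |\zeta(s)|^2 |\psi(s)|_\nu^2\, ds.
\]

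The quantitative heart of the proof is the bound $\int_0^\infty |\zeta(s)|^2 ds \leq \tfrac{\pi}{2}|z_0| \leq \tfrac{\pi}{2}\rho_0$. Writing $w := z_0 \expig = -c + i\beta$ with $c \geq 0$ and $c^2 + \beta^2 = |z_0|^2$, the explicit formula $\zeta(s) = z_0/(1 - w s)$ reduces the integral to $|z_0|^2 \int_0^\infty ds/((1+cs)^2 + \beta^2 s^2)$. An elementary computation (substitute $c = |z_0|\sin\varphi$, $|\beta| = |z_0|\cos\varphi$, $\varphi \in [0,\pi/2]$) shows this equals $|z_0|(\pi/2 - \varphi)/\cos\varphi$, a quantity maximized at $\varphi = 0$ (where $w$ is purely imaginary) and equal to $\tfrac{\pi}{2}|z_0|$ there.

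To conclude I would run a standard bootstrap/continuation argument. Since $|\psi(0)|_\nu \leq \rho_1 < r$ (using $\rho_1 \leq \rho_1 \exp(\pi r \rho_0/2) < r$ from \eqref{eq:RadiiExponential}), continuity produces a maximal interval $[0, t^*)$ on which $|\psi(s)|_\nu \leq r$. On that interval the quadratic estimate linearizes as $|\psi(s)|_\nu^2 \leq r |\psi(s)|_\nu$, and Gronwall's inequality then gives
\[
|\psi(t)|_\nu \leq \rho_1 \exp\!\left(r \int_0^t |\zeta(s)|^2 ds\right) \leq \rho_1 \exp\!\left(\tfrac{\pi}{2} r \rho_0\right) < r.
\]
The strict inequality prevents $t^*$ from being finite, so the bound extends to all $t \geq 0$, yielding global existence. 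Then $|a(t) - \iota^0(\zeta(t))|_\nu = |\zeta(t)|^2 |\psi(t)|_\nu \leq r|\zeta(t)|^2$; since $|1 - z_0 \expig t| \to \infty$ as $t \to \infty$ whenever $z_0 \neq 0$, we get $\zeta(t) \to 0$ and hence $a(t) \to 0$. The main obstacle is really the blow-up algebra—verifying that the single rescaling by $\zeta^2$ simultaneously cleans up the resonant term and produces a nonlinearity small enough to match the integrable tail of $|\zeta|^2$—together with the sharp elementary integral computation that produces the constant $\pi/2$. Once those are in place, the Gronwall/bootstrap step is routine, and the exponential shape of \eqref{eq:RadiiExponential} is precisely what lets the bootstrap close.
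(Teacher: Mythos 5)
Your proposal is correct and follows exactly the approach the paper delegates to \cite{Jaquette2020} Theorem 2.3, namely blowing up about the spatially homogeneous solution via $\psi = (a - \iota^0\zeta)/\zeta^2$, estimating the semigroup $e^{e^{i\theta}Lt}$ by $1$ on $\ell^1_\nu$ for all $\theta\in[-\pi/2,\pi/2]$, and closing a Gronwall bootstrap using the sharp bound $\int_0^\infty|\zeta(s)|^2\,ds\le\tfrac{\pi}{2}|z_0|$. The derivation of the rescaled equation, the elementary integral yielding the constant $\pi/2$ (maximized when $z_0 e^{i\theta}$ is purely imaginary), and the continuation argument all check out.
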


In short, if parameters $r,\rho_0,\rho_1$ are such that the inequality \eqref{eq:RadiiExponential} is satisfied, then $\cB(\rho_0,\rho_1)$ is a trapping region. 
As the left-hand side of \eqref{eq:RadiiExponential} is monotonically increasing in $\rho_0$ and $\rho_1$, it is ideal to take these parameters as small as possible. 
For an explicit sequence $\bar{a} \in \ell_\nu^1$, one may readily compute values of  $ \rho_0$, $\rho_1$ for which $ \bar{a} \in \cB(\rho_0,\rho_1)$, and then search for a value of $r$ for which \eqref{eq:RadiiExponential} is satisfied. 
A similar procedure may be done when considering a ball about $ \bar{a}$, and we refer to \cite{Jaquette2020} for further details.

%%%%%%%%%%%%%%%%%%%%%%%%%%%%%%%%%%%%%%%%%%%%%%%%%%%%%%%
%%%%%%%%%% RESULTS & DISCUSSION 5.1 %%%%%%%%%%%%
%%%%%%%%%%%%%%%%%%%%%%%%%%%%%%%%%%%%%%%%%%%%%%%%%%%%%%%
%\input{Results_pt1}

\section{Results and discussions}
\label{sec:ResultsAndDiscussion}

Finally, combining the above three techniques, namely the parameterization of the local unstable manifold (Section~\ref{sec:UnstableManifold}), the rigorous integrator (Section~\ref{sec:RigorousIntegrator}) and the computation of the trapping region (Section~\ref{sec:TrappingRegion}), we present the proofs of our main theorems. The proof of Theorem \ref{prop:ManyHeteroclinics} is completed with computer assistance. 
%All computations are carried out on Microsoft Windows 10 Pro, Intel(R) Core(TM) i9-10900K CPU@3.70 GHz, and MATLAB 2020b with INTLAB - INTerval LABoratory \cite{Ru99a} version 11 and Chebfun - numerical computing with functions \cite{MR2767023} version 5.7.0. 
All the codes for generating the following results with computer-assisted proofs are available at \cite{bib:codes}.

\subsection{Proof of Theorem \ref{prop:ManyHeteroclinics} and \ref{prop:GrowUp}}

%\JJ{Do we want to put a proof of Theorem \ref{prop:ManyHeteroclinics} here? - jj} {\color{red} Yes. Akitoshi}

\begin{proof}[Proof of Theorem \ref{prop:ManyHeteroclinics}]
	Our proof of existence of heteroclinic orbits consist of three parts. Firstly we construct the parameterization of the strong unstable manifold associated with the eigenpair $(  e^{i \theta} \tilde{\lambda}, e^{i \psi_k} \tilde{h})$ introduced in Section \ref{sec:UnstableManifold}.
	Secondly rigorous integrator provided in Section \ref{sec:RigorousIntegrator} starts from the rigorous inclusion of the endpoint of unstable manifold. The integrator propagates the inclusion of a point on the unstable manifold in forward time using the time stepping scheme. Finally, we validate that the hypothesis of Theorem \ref{thm:HomoclinicBlowup} holds after several time stepping, that is the solution of the Cauchy problem enters the trapping region described in Section~\ref{sec:TrappingRegion}. Then the whole orbit connects the equilibrium $u_1$ to zero. This completes the proof.

	(a) For $\theta= 0$, our computer-assisted approach succeeded in proving the heteroclinic orbits with indices $k=0,\dots,320, 358,359$. The results of validation are shown in Figure \ref{fig:result_CGL_0} (right). We have almost the same solution distribution as the one given by non-rigorous numerics. For indices $321\le k\le 357$, Our computer-assisted approach failed to prove the global existence of solution (in the third step). There are two reasons for such failure. One is that the propagation of errors causes the validation to fail. This is a limitation of the rigorous integrator. Further improvements of the integrator are needed to better control the propagation of errors. The other is that the asymptotic to an unbounded solution makes it impossible to validate the global existence of solutions. As presented in Figure \ref{fig:onion}, the solution profile appears to  blowup for some angles. Since our proof is based on rigorous numerics, we have strong evidence of existence of blow-up solutions.
	
	\begin{figure}[htbp]
		\begin{minipage}{0.5\hsize}
			\centering
			\includegraphics[width=\textwidth]{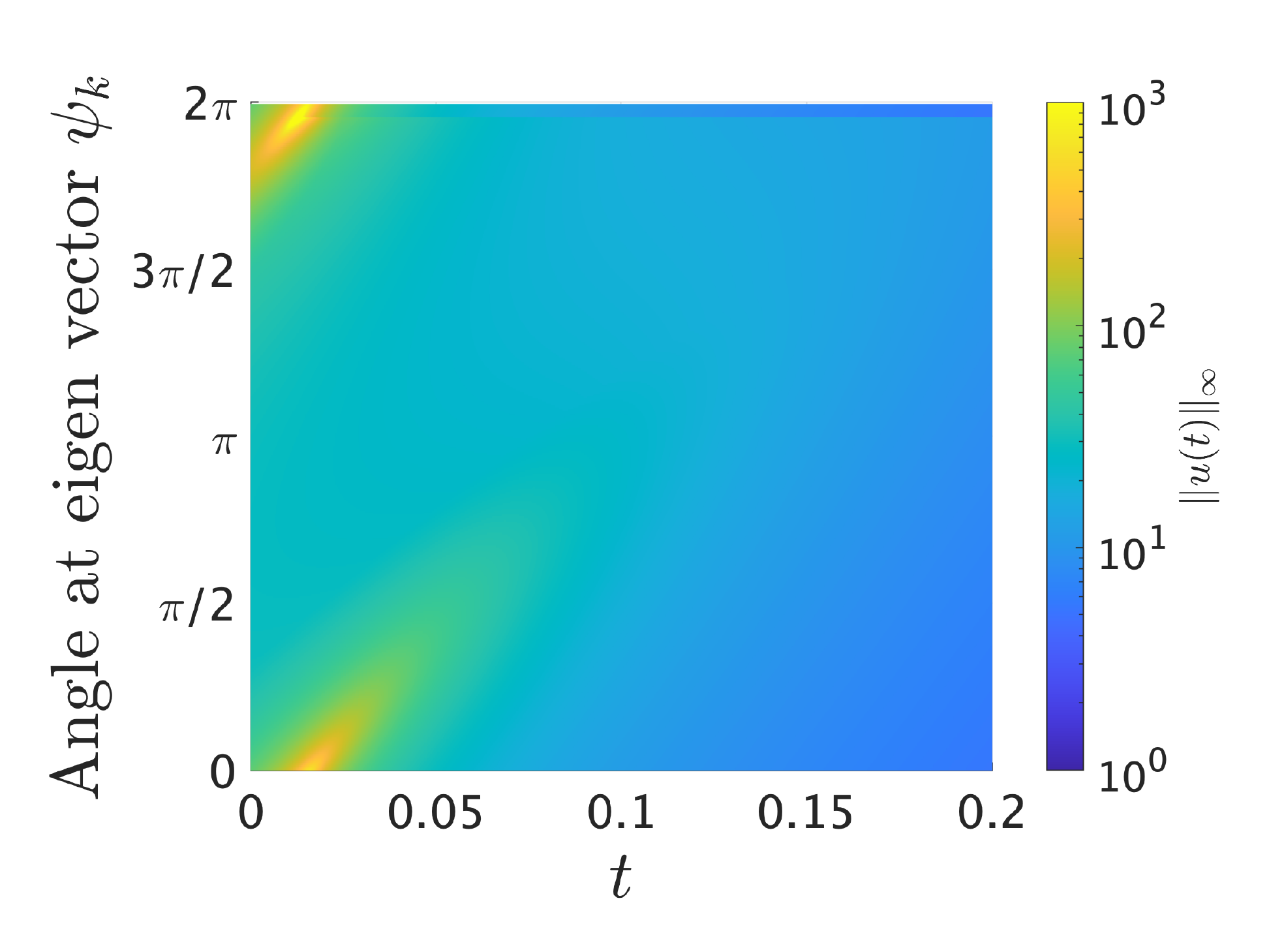}
			%		(a)
		\end{minipage}
		\begin{minipage}{0.5\hsize}
			\centering
			\includegraphics[width=\textwidth]{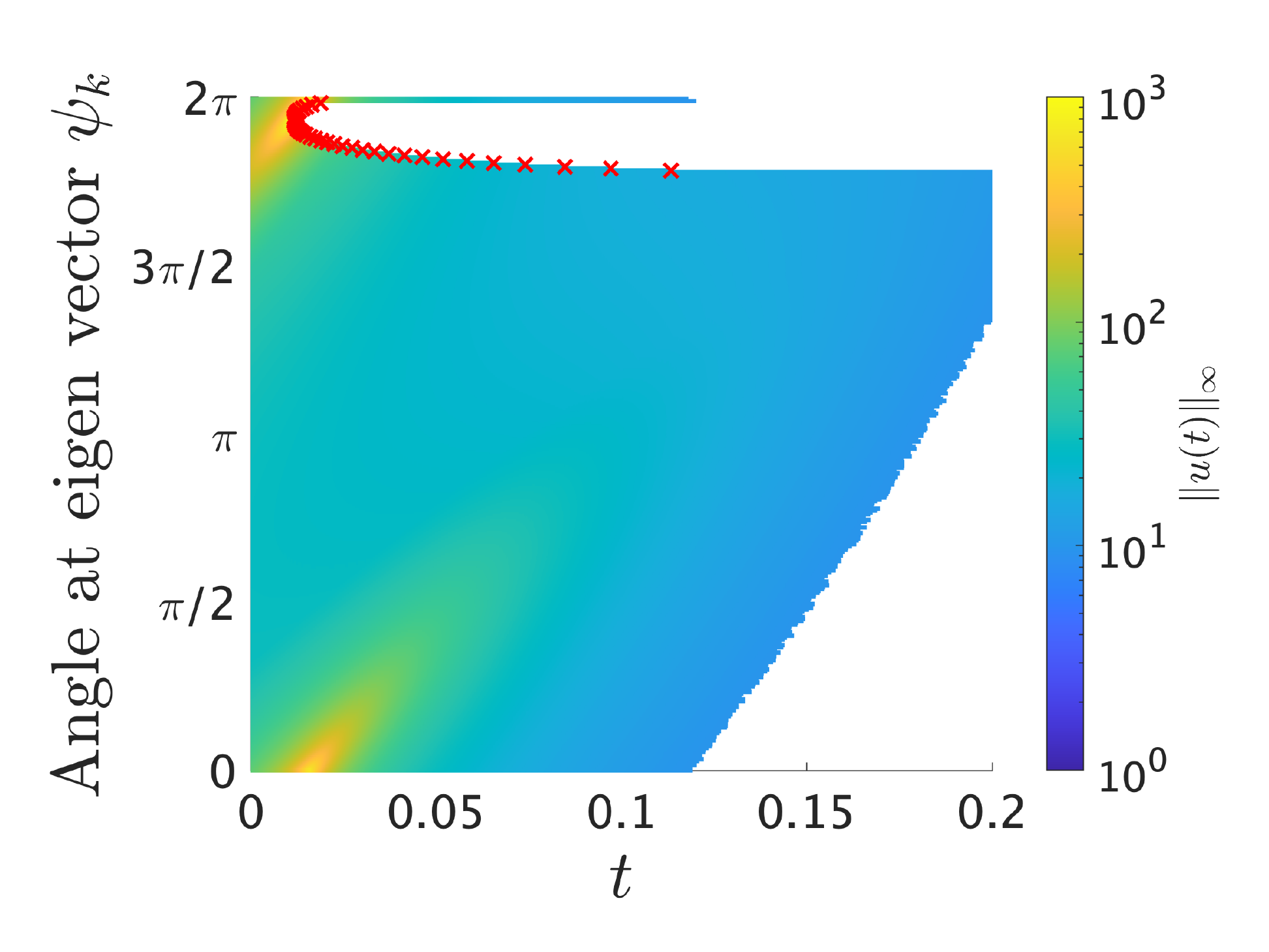}
			%		(b)
		\end{minipage}
		\caption{Plotting the time-dependent supremum norm of the solution of \eqref{eq:CGL} in the case of $\theta=0$ (complex-valued nonlinear heat equations). (Left) Non-rigorously computed result with respect to the angle $\psi_k$ at the unstable eigenvector.  (Right) Results of rigorous integration with validation of global existence. If there is a red cross mark at the end of time (for the case of $\psi_k$ with $321 \leq k \leq 357$), the integrator fails in validating the solution due to error propagation. Unless cross-marked, it is successfully proved that the solution exist globally in time and that such a solution converges to the zero function.}
		\label{fig:result_CGL_0}
	\end{figure}
	
	(b) We show the results for the case of $\theta=\pi/4$ in Figure \ref{fig:result_CGL_pi/4} (right). As in the case of $\theta = 0$, the distribution of solutions is almost the same as the one obtained by non-rigorous numerics. Our computer-assisted approach succeeded in proving existence of heteroclininc orbits except for the indices $311 \leq k \leq 332$, which is slightly narrower compared with the case of $\theta=0$. This is because the amplitude of the solution profiles is smaller than that for $\theta=0$. The main difference from the case of $\theta=0$ is that the Morse index of the equilibrium is different (2 for $\theta=0$ and 1 for $\theta=\pi/4$). Other than that, the results of computer-assisted proofs were almost the same as the $\theta = 0$ case. We also have an evidence to conjecture that there could be blow-up solutions for some angles.
	
	\begin{figure}[htbp]
		\begin{minipage}{0.5\hsize}
			\centering
			\includegraphics[width=\textwidth]{figs/CGL_pi_4_pt1_solution_distribution}
			%		(a)
		\end{minipage}
		\begin{minipage}{0.5\hsize}
			\centering
			\includegraphics[width=\textwidth]{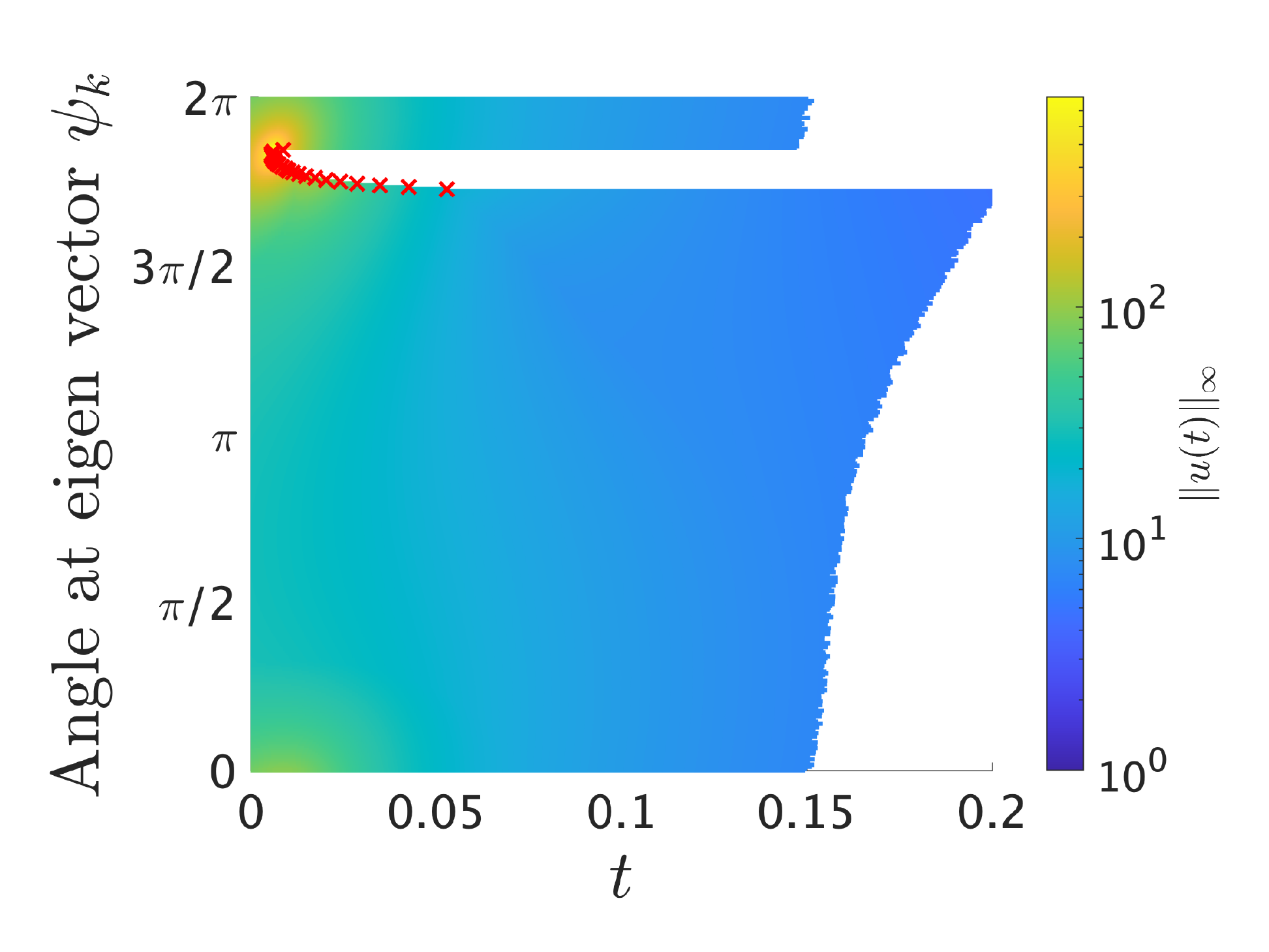}
			%		(b)
		\end{minipage}
		\caption{Plotting the time-dependent supremum norm of the solution of \eqref{eq:CGL} in the case of $\theta=\pi/4$. (left) Non-rigorously computed result. (right) Results of rigorous integration with validation of global existence. The integrator fails for the case of $\psi_k$ with $311 \leq k \leq 332$. For other cases, the global existence of solutions converging to zero was successfully proved.}
		\label{fig:result_CGL_pi/4}
	\end{figure}
	
	(c) Finally, for $\theta=\pi/2$ (the nonlinear Schr\"odinger case), Figure \ref{fig:result_NLS} (b) and (c) show results of our computer-assisted proofs. 
	In this case the nontrivial equilibrium has one unstable eigenvalue, one stable eigenvalue, and infinitely many imaginary eigenvalues. Therefore it is easy to predict that the dynamics of the solution is different from the cases $\theta \in \{ 0,\pi/4\}$.
	Compared to the other two cases, the behavior of the solution is more complicated, which is shown in Figure \ref{fig:result_NLS} (a). The former solution appears to be transitioning smoothly, while the latter solution appears to be more oscillatory. Then such oscillatory behaving solution is difficult to validate, and failed in the range $214 \leq k \leq 321$. More precisely, the error propagation is so significant that it is difficult to integrate the solution rigorously (see Figure \ref{fig:result_NLS} (c)). However, our results, as in the other two cases, suggest that there could be a blowup of solutions at certain angles. \qedhere

	\begin{figure}[htbp]
		\centering
		\begin{minipage}{0.48\hsize}
			\centering
			\includegraphics[width=.95\textwidth]{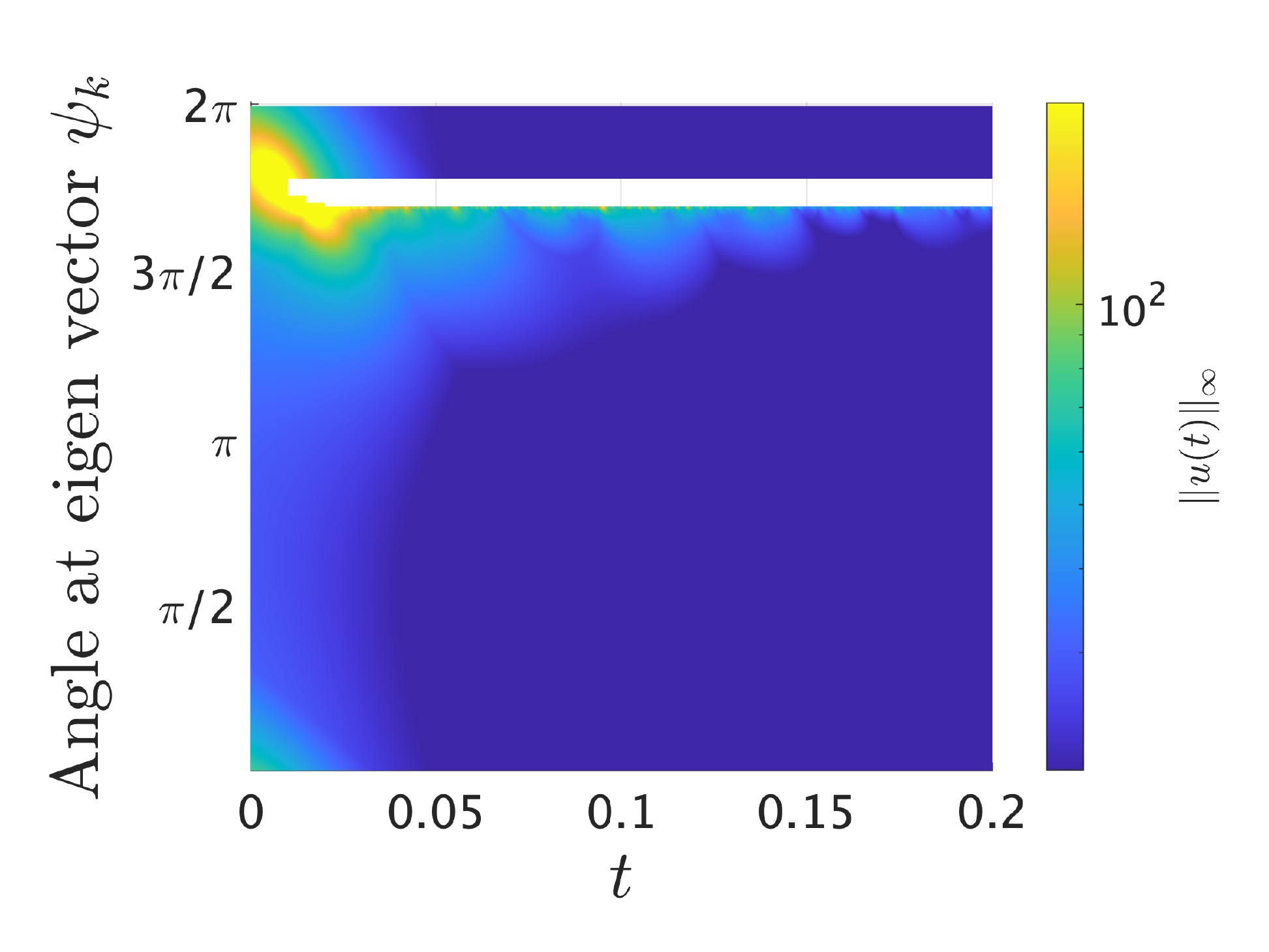}
			(a)
		\end{minipage}
		\begin{minipage}{0.48\hsize}
			\centering
			\includegraphics[width=\textwidth]{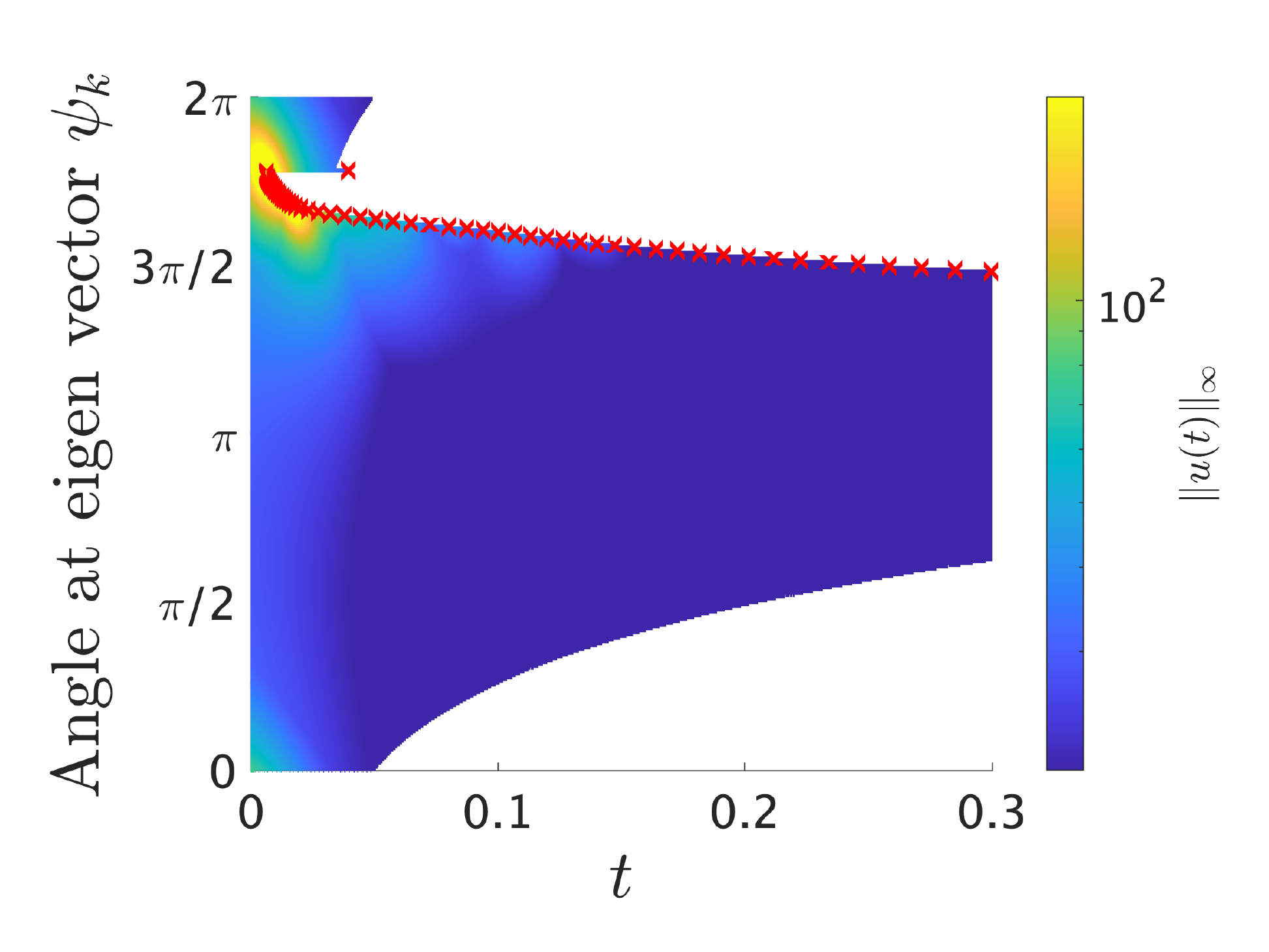}
			(b)
		\end{minipage}
		\begin{minipage}{0.5\hsize}
			\centering
			\includegraphics[width=\textwidth]{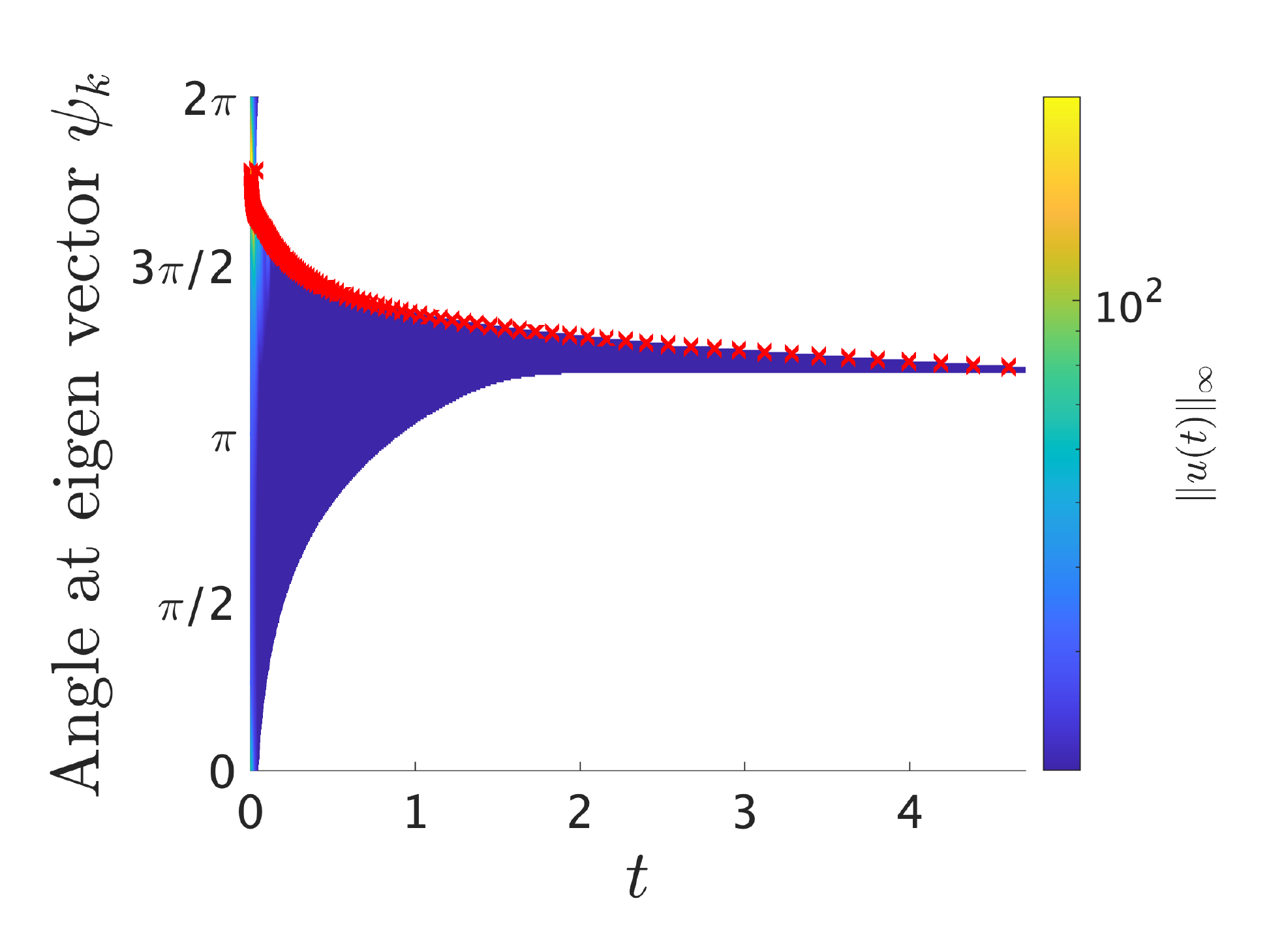}
			(c)
		\end{minipage}
		\caption{Plotting the time-dependent supremum norm of the solution of \eqref{eq:CGL} in the case of $\theta=\pi/2$ (the nonlinear Schr\"odinger equation). (a) Non-rigorously computed result. The dynamics appears to be more complicated than for the cases of $\theta\in (-\pi/2,\pi/2)$. (b) Results of rigorous integration with validation of global existence. The integrator fails for the case of $\psi_k$ with $214 \leq k \leq 321$. Due to the complicated dynamics, it has a wider range of integration failures than the other two cases. (c) Zooming out of the Figure (b). The error propagation prevents from rigorously integrating the solution. But, for some cases, the computer-assisted proof of global existence of solutions converging to zero was successfully obtained.}
		\label{fig:result_NLS}
	\end{figure}
	
\end{proof}

\begin{proof}[Proof of Theorem \ref{prop:GrowUp}]
	The proof follows from virtually the same argument as given in  \cite[Lemma 2.6]{stuke2017blow}, which we summarize below.   
	
	For $\nu \geq 0$ and the unit disk $ \mathbb{D}\subseteq \C$, let $P_\theta: \mathbb{D}  \to \ell^1_\nu$ denote a parameterization of the strong unstable manifold about the equilibrium $ u_1$ given in Theorem \ref{prop:ManyHeteroclinics}. 
	By the construction, as described in Section~\ref{sec:UnstableManifold}, the map $P_\theta$ is analytic. 
	Fix a continuous linear functional $\pi: \ell^1_\nu \to \C$ which separates $u_1$ and $0$, whereby $ \pi(u_1) \neq \pi(0)$ and $ \pi \circ P_\theta: \mathbb{D}\to \C$ is analytic. 
	
	Consider the flow map $ \Phi: \R_+ \times \ell^1_\nu \to \ell^1_\nu$ of the evolutionary equation \eqref{eq:CGL}. 
	If $|\theta|<\pi/2$ then $\Phi$ is an analytic semigroup.   
	For $ n \geq 0$ we define a sequence of projected time-$n$ maps 	$g_n: \mathbb{D} \to \C$ given by 
	\[
	g_n(z) \bydef \pi \circ \Phi(n, P_\theta(z) ).
	\]
	As this is a composition of analytic functions, each $ g_n$ is analytic.

	The proof of unbounded solutions proceeds by contradiction.  Suppose there is a uniform bound $K >0 $ such that $\sup_{z \in \mathbb{D}} |g_n(z)| \leq K$ for all $ n \geq 0$. 	 
	By the construction of our parameterization $P_\theta$, we have $ g_n(0) = \pi( u_1) =p^*$ for all $n \in \N$. 
	As we have demonstrated with computer-assisted proofs at parameters $\theta = 0, \pi/4$, there exists a heteroclinic point $q_0 \in \mathbb{D}$ such that $\lim_{t \to - \infty} \Phi(t,q_0) = u_1$ and $\lim_{t \to + \infty} \Phi(t,q_0) = u_0 \equiv 0$, the zero equilibrium.
	As the trapping region constructed in Theorem \ref{thm:HomoclinicBlowup} is an open set, then  there exists a neighborhood $U \subseteq \mathbb{D}$ about $ q_0$ such that all points in $U$ are heteroclinic points between $u_1$ and $u_0$.
	Hence $\lim_{ n \to \infty} g_n(q) =\pi(0) =0$ for all $q \in U$. 
	Recall that as $\pi$ separates $u_1$ and $u_0 \equiv 0$, then $ p^* \neq 0$. 
	
	Since the functions $g_n$ are uniformly bounded, by the Vitali theorem there exists an analytic function $ g: \mathbb{D} \to \C$ to which the sequence $ \{ g_n\}_{n=0}^\infty$ converges uniformly on compact subsets. 
	As argued above, $g(q) = \lim_{ n \to \infty} g_n(q) = 0$ for all $ q \in U$.  
	As $g$ is analytic and locally constant on the open set $U$, then  $g$ must be constant and equal $0$ on all of $ \mathbb{D}$. But this contradicts $g(0) =p^*$. 
	Thus the functions $g_n$ cannot be uniformly bounded for all $ n \geq 0$. 
	As $\pi$ was chosen to be an arbitrary continuous linear functional which separates $u_1$ and $0$, it follows that the globalization of the strong unstable manifold of $u_1$ must be unbounded.   
	
\end{proof}

Theorem \ref{prop:GrowUp} establishes the existence of solutions which limit to the non-trivial equilibrium in negative time, and are unbounded in positive time. 
However it does not distinguish whether such a solution exists for all time, a so-called growup solution, or blows up in finite time. 
Note also that Theorem \ref{prop:GrowUp} is stated just for $ \theta =0, \pi / 4$,  because we cite Theorem \ref{prop:ManyHeteroclinics} for the existence of  a heteroclinic orbit. 
For other values of $ \theta \in (-\pi / 2 , \pi /2)$ the existence of an heteroclinic orbit from $u_1$ to $0$  similarly would force the existence of unbounded solutions. 
However this argument does not readily extend to the case $\theta = \pm \pi/2$. 
This is because  when $ \theta = \pm \pi/2$   then \eqref{eq:CGL} becomes a nonlinear Schr\"odinger equation, and the flow map $\Phi$ generates a continuous semigroup, but not an analytic semigroup.

%%%%%%%%%%%%%%%%%%%%%%%%%%%%%%%%%%%%%%%%%%%%%%%%%%%%%%%
%%%%%%%%%% RESULTS & DISCUSSION 5.2 %%%%%%%%%%%%
%%%%%%%%%%%%%%%%%%%%%%%%%%%%%%%%%%%%%%%%%%%%%%%%%%%%%%%
%\input{Results_pt2}

\subsection{Discussion and Outlook}
%\newpage

Numerics are often indispensable in understanding nonlinear evolutionary equations.  
However the reliability of these techniques are significantly affected when trajectories are integrated for a long time or their norm becomes very large.  
This is necessarily the case when studying unstable manifolds or blowup solutions, so it is important to know how much we can trust numerics in these situations.

In this work, we have  investigated the long term behavior of solutions in the  (1-complex dimensional) strong unstable manifold of a nontrivial equilibrium $u_1$. 
By considering the equation \eqref{eq:CGL} for the parameters $ \theta \in \{ 0 , \pi/4, \pi /2\}$ we are able to investigate how our numerical technique behave in the dissipative regime, the dispersive regime, and in between.

By using rigorous numerics, we are able to answer questions that are not typically accessible by standard numerical methods. 
Namely  we are able to rigorously establish that the finite precision with which we performed our computations was sufficient to prove the existence of heteroclinic orbits from $u_1$ to $0$.  
Hence additional calculations at higher precision are not necessary for us to be confident of our result. 
And by using a forcing argument from complex analysis, we are able to establish the existence of unbounded solutions which limit to $u_1$ in negative time.

The restriction to only considering the strong unstable manifold of $u_1$ provides a tractable restriction for us to focus our analysis.   
This  isolates the primary difficulties of long integration time, and solutions which pass near a singularity.   
However as mentioned in Section \ref{sec:Introduction}, for $\theta \in (-\theta^*,\theta^*)$ the unstable manifold of $ u_1$ has complex dimension two. Thus our restriction to the strong unstable manifold is not capturing all of the dynamics in the entire unstable manifold.  
Not to mention that equation \eqref{eq:CGL} has infinitely other equilibria which we have not studied in this paper. 

Nevertheless, the entire picture of  how all of the dynamics behave on these strong unstable manifolds is still incomplete. 
Somewhat generalizing the $11^{th}$ open question in \cite{Jaquette2020}, we make the following conjecture. 
\begin{conjecture}
	For all $ \theta \in [ - \pi /2 , \pi / 2]$ there exists a unique trajectory in the (1-complex dimensional) strong unstable manifold of $ u_1$ which blows up in finite time. 
\end{conjecture}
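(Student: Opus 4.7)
The plan is to split the conjecture into two parts: existence of at least one finite-time blowup orbit on the strong unstable manifold, and uniqueness of such an orbit. The natural parameter space is the orbit space of the flow restricted to the local manifold: since $\Phi(t,P_\theta(\sigma)) = P_\theta(e^{\tilde\lambda t}\sigma)$ while $\sigma$ remains small, orbits are logarithmic spirals in the disk $\mathbb{D}$ and the orbit space descends to a circle $S^1$. On this circle the heteroclinic set $\mathcal{H}_\theta\subseteq S^1$ of angles leading to convergence to $0$ is open (by Theorem~\ref{thm:HomoclinicBlowup}), and by Theorem~\ref{prop:ManyHeteroclinics} it is of nearly full arc-length for $\theta\in\{0,\pi/4,\pi/2\}$.

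For the existence half I would first strengthen Theorem~\ref{prop:GrowUp}. That result already produces unbounded trajectories for $\theta\in\{0,\pi/4\}$ via a Vitali forcing argument; extending it to every $\theta\in(-\pi/2,\pi/2)$ reduces to proving at least one heteroclinic orbit $u_1\to 0$ at each such $\theta$, which I expect follows from a continuation / interval-arithmetic homotopy from the already-proved parameter values. To upgrade ``unbounded'' to ``finite-time blowup'' I would use the self-similar structure forced by the quadratic homogeneity of the nonlinearity and the scaling $u(t,x)\mapsto n^2 u(n^2 t,nx)$: an unbounded analytic-semigroup orbit should concentrate onto a profile of the form $(T-t)^{-1}\Psi((T-t)^{-1/2}(x-x_0))$, giving $T<\infty$ by a Merle--Rapha\"el style argument. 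The NLS case $\theta=\pm\pi/2$ is separate since the semigroup is no longer analytic, and existence of any unbounded orbit would have to be handled by virial / moment identities or by the reversibility of the equation.

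For the uniqueness half, which I expect to be the main obstacle, the tool is the joint analyticity of $P_\theta:\mathbb{D}\to\ell_\nu^1$ and of the flow for $|\theta|<\pi/2$. Let $\mathcal{N}_\theta = S^1\setminus\mathcal{H}_\theta$ be the closed set of non-convergent angles. The first goal is to show $\mathcal{N}_\theta$ has empty interior in $S^1$; this should follow from a Vitali argument in the spirit of Theorem~\ref{prop:GrowUp} applied to a one-parameter analytic family of initial data sweeping through a putative arc of $\mathcal{N}_\theta$, using that on any open arc of $\mathcal{H}_\theta$ the time-$n$ maps $g_n$ of Theorem~\ref{prop:GrowUp} converge pointwise to $0$. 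The second and harder step is to collapse $\mathcal{N}_\theta$ to a single point, which I would approach by combining a Schwarz-lemma estimate for the analytic function $\sigma\mapsto T(\sigma)^{-1}$ (with $T$ the maximal existence time) with the Riemann-surface extension of solutions past the branching singularity of Masuda type~\cite{MASUDA1983119,Masuda1984}: holomorphic monodromy around the singularity should force the branch-point locus in $\mathbb{D}$ to be a single logarithmic spiral, so that $\mathcal{N}_\theta$ consists of exactly one orbit.

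The genuinely hard step is this Riemann-surface / monodromy analysis: showing only one angular direction $\psi$ is compatible with a Masuda-type branching singularity and ruling out the possibility of finitely many isolated blowup angles or a blowup set of positive arc-length. The NLS case $\theta=\pm\pi/2$ compounds the difficulty, since the analytic-semigroup structure is lost; a separate approach, most likely a shooting argument on the associated self-similar profile ODE or a reversibility reduction, will be needed there.
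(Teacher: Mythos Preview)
The statement is presented in the paper as an open \emph{conjecture}; there is no proof to compare against, and the surrounding discussion explicitly flags both the upgrade from ``unbounded'' to ``finite-time blowup'' and the NLS endpoint $\theta=\pm\pi/2$ as unresolved. Your proposal must therefore stand on its own, and it has several genuine gaps.

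The most basic one is a missing dichotomy: you tacitly assume that an orbit on the strong unstable manifold either converges to $0$ or blows up in finite time. Nothing rules out a point of $\mathcal{N}_\theta = S^1\setminus\mathcal{H}_\theta$ whose forward orbit remains bounded forever without converging to $0$ --- it could be heteroclinic to one of the infinitely many other equilibria the paper mentions, or simply recurrent. Without this dichotomy, structural statements about $\mathcal{N}_\theta$ say nothing about blowup. Second, your use of Vitali is inverted: in Theorem~\ref{prop:GrowUp} one \emph{assumes} the family $\{g_n\}$ is uniformly bounded on $\mathbb{D}$ and derives a contradiction from pointwise convergence to $0$ on the open set $\mathcal{H}_\theta$; the output is only that some orbit is unbounded, not that $\mathcal{N}_\theta$ has empty interior. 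If $\mathcal{N}_\theta$ contained an open arc you would have no uniform bound to feed into Vitali in the first place. Third, the map $\sigma\mapsto T(\sigma)^{-1}$ cannot be holomorphic on $\mathbb{D}$: it vanishes on the nonempty open set $\mathcal{H}_\theta$, so by the identity theorem holomorphy would force $T(\sigma)^{-1}\equiv 0$ and hence no finite-time blowup at all. A Schwarz-lemma argument on this function therefore cannot work as stated. Finally, the Merle--Rapha\"el upgrade from ``unbounded'' to ``finite-time blowup'' presumes variational or conservation-law structure that \eqref{eq:CGL} does not possess for general $\theta$, and the monodromy picture remains, as you yourself concede, a heuristic rather than a strategy.
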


It could be feasible to extend our current techniques used to prove Theorem \ref{prop:ManyHeteroclinics} by a parameter continuation in $\theta$, and prove that there is a heteroclinic orbit from $u_1$ to $0$ for all $\theta \in [-\pi/2,\pi/2]$. 
By the same argument in Theorem \ref{prop:GrowUp} this would force the existence of unbounded solutions which limit to $ u_1$ in backwards time, for all $ \theta \in (-\pi/2,\pi/2)$.  
However it still remains to prove the existence of unbounded solutions in the NLS case of $ \theta = \pm \pi /2$. 

Furthermore our current proof of unbounded solutions does not establish finite time blowup. 
In the case $ \theta = 0$ we proved in \cite{takayasu2019rigorous} that the initial data $ u_0(x) = 50(1-\cos( 2 \pi x))$ blows up at time $t_* \in  ( 0.0116, 0.0145)$ using rigorous numerics. 
This argument followed the approach of Masuda, wherein we integrated the solution in the complex plane of time to establish a branching singularity. 
Such an approach could be directly applied to prove blowup for other initial data, but only if the blowup set is robust and not isolated. 
In finite dimensional ODEs the recent work \cite{lessard2021geometric} has demonstrated computer-assisted proofs of unstable blowup. 
However it is unclear how to extend this technique to the infinite dimensional case.

\section*{Acknowledgements} 
The second author was supported by an NSERC Discovery Grant.
The third author was supported by JSPS KAKENHI Grant Numbers JP18K13453, JP20H01820, JP21H01001.

\bibliography{Bib_NLS,Bib_complexblowup}
\bibliographystyle{alpha}

\end{document}